\documentclass[11pt]{article}
\usepackage{amssymb,latexsym,amsmath,amsbsy,amsthm,amsxtra,amsgen,graphicx}
\oddsidemargin=0in
\evensidemargin=0in
\topmargin=-.4in
\textheight=131ex
\textwidth=42em
\baselineskip=16pt

\newfont{\msbm}{msbm10 at 11pt}
\newcommand {\R} {\mbox{\msbm R}}

\newcommand {\N} {\mbox{\msbm N}}

\newtheorem{Theo}{Theorem}
\newtheorem{Lemma}[Theo]{Lemma}
\newtheorem{Cor}[Theo]{Corollary}
\newtheorem{Prop}[Theo]{Proposition}
\newtheorem{Exm}[Theo]{Example}

\begin{document}
\title{The number of small blocks in exchangeable random partitions}
\author{by Jason Schweinsberg\thanks{Supported in part by NSF Grant DMS-0805472}
\\ University of California, San Diego}
\maketitle

\footnote{{\it AMS 2000 subject classifications}.  Primary 60C05;
Secondary 60J99, 92D25}

\footnote{{\it Key words and phrases}.  Random partitions, coalescent processes, population genetics}

\vspace{-.5in}
\begin{abstract}
Suppose $\Pi$ is an exchangeable random partition of the positive integers and $\Pi_n$ is its restriction to $\{1, \dots, n\}$.  Let $K_n$ denote the number of blocks of $\Pi_n$, and let $K_{n,r}$ denote the number of blocks of $\Pi_n$ containing $r$ integers.  We show that if $0 < \alpha < 1$ and $K_n/(n^{\alpha} \ell(n))$ converges in probability to $\Gamma(1-\alpha)$, where $\ell$ is a slowly varying function, then $K_{n,r}/(n^{\alpha} \ell(n))$ converges in probability to $\alpha \Gamma(r - \alpha)/r!$.  This result was previously known when the convergence of $K_n/(n^{\alpha} \ell(n))$ holds almost surely, but the result under the hypothesis of convergence in probability has significant implications for coalescent theory.  We also show that a related conjecture for the case when $K_n$ grows only slightly slower than $n$ fails to be true.
\end{abstract}

\section{Introduction}
We begin by recalling some basic facts about exchangeable random partitions.  Suppose $\pi$ is a partition of the set $\N$ of positive integers.  If $\sigma$ is a permutation of $\N$, then we can define a partition $\sigma \pi$ such that the integers $\sigma(i)$ and $\sigma(j)$ are in the same block of $\sigma \pi$ if and only if $i$ and $j$ are in the same block of $\pi$.  A random partition $\Pi$ if $\N$ is said to be exchangeable if $\sigma \Pi$ and $\Pi$ have the same distribution for all permutations $\sigma$ of $\N$ having the property that $\sigma(j) = j$ for all but finitely many $j$.

In 1978, Kingman \cite{king78} proved an analog of de Finetti's Theorem that characterizes all possible exchangeable random partitions.  He showed that there is a one-to-one correspondence between distributions of exchangeable random partitions and probability measures on the infinite simplex $\Delta = \{(x_i)_{i=1}^{\infty}: x_1 \geq x_2 \geq \dots \geq 0, \sum_{i=1}^{\infty} x_i \leq 1\}$.  Given a probability distribution $\mu$ on $\Delta$, the associated exchangeable random partition is constructed as follows.  First, choose a random sequence $(P_j)_{j=1}^{\infty}$ with distribution $\mu$.  Then define random variables $(\xi_k)_{k=1}^{\infty}$ that are conditionally independent given $(P_j)_{j=1}^{\infty}$ and satisfy $P(\xi_k = i|(P_j)_{j=1}^{\infty}) = P_i$ and $P(\xi_k = -k|(P_j)_{j=1}^{\infty}) = 1 - \sum_{j=1}^{\infty} P_j$.  Finally, define $\Pi$ to be the random partition of $\N$ such that two integers $i$ and $j$ are in the same block of $\Pi$ if and only if $\xi_i = \xi_j$.

It follows from this construction and the Law of Large Numbers that if $B$ is a block of an exchangeable random partition $\Pi$, then the asymptotic frequency of the block, defined by $$\lim_{n \rightarrow \infty} \frac{1}{n} \sum_{i=1}^n {\bf 1}_{\{i \in B\}},$$ exists almost surely.  The nonzero asymptotic frequencies of the blocks of $\Pi$ are the nonzero terms of the sequence $(P_j)_{j=1}^{\infty}$.  Each integer is in a block having positive asymptotic frequency with probability $\sum_{j=1}^{\infty} P_j$ and is in a singleton block with probability $1 - \sum_{j=1}^{\infty} P_j$.

Given an exchangeable random partition $\Pi$ of $\N$, let $\Pi_n$ denote its restriction to $\{1, \dots, n\}$.  That is, $\Pi_n$ is the partition of $\{1, \dots, n\}$ such that two integers $i$ and $j$ in $\{1, \dots, n\}$ are in the same block of $\Pi_n$ if and only if they are in the same block of $\Pi$.  Let $K_n$ be the number of blocks of $\Pi_n$, and let $K_{n,r}$ be the number of blocks of $\Pi_n$ having size $r$.  In this paper, we show how asymptotic results for the random variables $K_{n,r}$ as $n \rightarrow \infty$ can be deduced from the asymptotic behavior of $K_n$.  Such results have already been proved, and are summarized in \cite{ghp}, for the case in which the asymptotic frequencies $P_j$ are deterministic and sum to one.  This is the setting of the classical infinite occupancy problem, in which infinitely many balls are placed independently into infinitely many boxes, with each ball going into the $j$th box with probability $P_j$.  Here we extend these results to the general case of random $P_j$ and explore the applications of this extension to coalescent theory and population genetics.

We note that in addition to the results below concerning the asymptotic behavior of $K_{n,r}$, Central Limit Theorems have been established for the number of small blocks in exchangeable random partitions under certain conditions.  See \cite{karlin} for some early work in this direction and \cite{bargne} for some recent extensions.

\subsection{The power law case}

We first consider the case in which the number of blocks $K_n$ grows like $n^{\alpha}$, where $0 < \alpha < 1$.  The proposition below is essentially due to Karlin \cite{karlin}.  More precisely, it follows from combining Theorem 1 of \cite{karlin} with a Tauberian theorem.  The result also appears as Corollary 21 in the recent survey \cite{ghp}.  Recall that a measurable function $\ell: (0, \infty) \rightarrow (0, \infty)$ is said to be slowly varying if for all $c > 0$, we have $\lim_{y \rightarrow \infty} \ell(cy)/\ell(y) = 1$.

\begin{Prop}\label{karlin1}
Let $(p_j)_{j=1}^{\infty}$ be a deterministic sequence such that $p_1 \geq p_2 \geq \dots \geq 0$ and $\sum_{j=1}^{\infty} p_j = 1$.  For $x > 0$, let $g(x) = \max\{j: p_j \geq x\}$.  Let $\Pi$ be an exchangeable random partition of $\N$ whose asymptotic block frequencies are given by $(p_j)_{j=1}^{\infty}$ almost surely, and define $K_n$ and $K_{n,r}$ as above.  Suppose $0 < \alpha < 1$.  Suppose $\ell: (0, \infty) \rightarrow (0, \infty)$ is a slowly varying function.  We have
\begin{equation}\label{k1a}
\lim_{x \rightarrow 0} \frac{x^{\alpha} g(x)}{\ell(1/x)} = 1
\end{equation}
if and only if
\begin{equation}\label{k1b}
\lim_{n \rightarrow \infty} \frac{K_n}{n^{\alpha} \ell(n)} = \Gamma(1 - \alpha) \hspace{.1in}\textup{a.s.}
\end{equation}
These two statements imply that for all $r \in \N$, we have
\begin{equation}\label{k1c}
\lim_{n \rightarrow \infty} \frac{K_{n,r}}{n^{\alpha} \ell(n)} = \frac{\alpha \Gamma(r - \alpha)}{r!} \hspace{.1in}\textup{a.s.}
\end{equation}
\end{Prop}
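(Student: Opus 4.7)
My approach uses Poissonization together with Karamata's Tauberian theorem. First, I would replace the urn model (placing $n$ balls, each going to box $j$ with probability $p_j$) by its Poissonized analog, in which balls arrive at the times of a unit-rate Poisson process and are routed to box $j$ with probability $p_j$. By thinning, the count in box $j$ at time $t$ is $\mathrm{Poisson}(tp_j)$, independently across $j$. Writing $\tilde K(t)$ and $\tilde K_r(t)$ for the Poissonized analogs of $K_n$ and $K_{n,r}$,
\[
E[\tilde K(t)] = \sum_{j=1}^\infty (1-e^{-tp_j}), \qquad E[\tilde K_r(t)] = \sum_{j=1}^\infty \frac{(tp_j)^r}{r!}\, e^{-tp_j},
\]
which can be rewritten as Stieltjes integrals against $-dg$ on $(0,1]$, exposing a Laplace-type structure.

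The equivalence between (\ref{k1a}) and $E[\tilde K(t)] \sim \Gamma(1-\alpha)\, t^{\alpha} \ell(t)$ is then an instance of Karamata's Tauberian theorem. The asymptotic for $E[\tilde K_r(t)]$ follows from a direct Abelian calculation: integrating by parts and substituting $u = tx$,
\[
r! \, E[\tilde K_r(t)] = \int_0^\infty u^{r-1} e^{-u} (r-u) \, g(u/t) \, du,
\]
and slow variation of $\ell$ combined with dominated convergence (via a Potter-bound domination split at $u = r$) give $r! \, E[\tilde K_r(t)] \sim t^{\alpha} \ell(t) \int_0^\infty u^{r-1-\alpha} e^{-u}(r-u) \, du = \alpha \, \Gamma(r-\alpha)\, t^{\alpha} \ell(t)$, since the last integral evaluates to $r\Gamma(r-\alpha) - (r-\alpha)\Gamma(r-\alpha) = \alpha \Gamma(r-\alpha)$.

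To promote expectation convergence to almost-sure convergence, use that under Poissonization both $\tilde K(t)$ and $\tilde K_r(t)$ are sums of independent Bernoulli indicators, so their variances are at most their expectations. Chebyshev plus Borel--Cantelli along a geometric subsequence $t_m = (1+\epsilon)^m$ give a.s.\ convergence at the $t_m$. Both $\tilde K(t)$ and $\tilde M_r(t)$, the number of boxes containing at least $r$ balls at time $t$, are monotone in $t$, so monotone interpolation between consecutive $t_m$ followed by $\epsilon \downarrow 0$ extends this to all $t$; the identity $\tilde K_r = \tilde M_r - \tilde M_{r+1}$ then transfers a.s.\ convergence to $\tilde K_r$. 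Finally, de-Poissonize via the coupling $K_n = \tilde K(T_n)$, where $T_n$ is the $n$-th arrival time and $T_n/n \to 1$ a.s.; an analogous but more delicate argument, using concentration of $|\tilde K_r(t) - \tilde K_r(s)|$ for $|t-s| \ll t$, handles $K_{n,r}$. The converse (\ref{k1b})$\Rightarrow$(\ref{k1a}) follows by combining uniform integrability (from the same variance bound) with the Tauberian half of Karamata's theorem applied to $E[\tilde K]$. The main obstacle I expect is the non-monotonicity of $K_{n,r}$ in $n$: this forces the detour through $M_{n,r}$ and careful control of the interpolation errors so that the differences of a.s.\ limits still converge to the stated constant $\alpha \Gamma(r-\alpha)/r!$.
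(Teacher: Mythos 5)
The paper does not actually prove Proposition~\ref{karlin1}: it is stated as a known result, cited from Karlin's Theorem~1 combined with a Tauberian theorem and as Corollary~21 of \cite{ghp}. Your sketch reconstructs precisely that classical route (Poissonize, rewrite the expected counts as Laplace--Stieltjes transforms of $g$, apply Karamata's Abelian/Tauberian theorem for the $\Gamma(1-\alpha)$ and $\alpha\Gamma(r-\alpha)$ constants, upgrade by variance bounds plus Borel--Cantelli and monotone interpolation, then de-Poissonize), so in spirit it matches the argument the paper is appealing to. Your integration-by-parts identity $r!\,E[\tilde K_r(t)]=\int_0^\infty u^{r-1}e^{-u}(r-u)\,g(u/t)\,du$ is correct, the Potter-bound domination is the right tool, and the evaluation $\int_0^\infty u^{r-1-\alpha}e^{-u}(r-u)\,du=\alpha\Gamma(r-\alpha)$ checks out.

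Two points are worth tightening. First, for the converse $(\ref{k1b})\Rightarrow(\ref{k1a})$ you invoke uniform integrability ``from the variance bound,'' but $\Var(\tilde K(t))\le E[\tilde K(t)]$ alone only gives UI once you already know $E[\tilde K(t)]=O(t^\alpha\ell(t))$; that bound has to be extracted separately (e.g.\ by Chebyshev: if $E[\tilde K(t)]\gg t^\alpha\ell(t)$ along a subsequence, concentration around the mean would force $\tilde K(t)/(t^\alpha\ell(t))\to\infty$ in probability, contradicting the a.s.\ limit). Alternatively, and more in line with what \cite{ghp} does, one can prove the deterministic concentration $K_n/\Phi(n)\to 1$ a.s.\ directly (the paper's (\ref{B})), which makes the a.s.\ limit for $K_n$ equivalent to the asymptotics of $\Phi(n)$ and dispenses with UI entirely. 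Second, the de-Poissonization of $K_{n,r}$ need not be ``more delicate'': since $M_{n,r}$ (boxes with at least $r$ balls) is nondecreasing in $n$ and satisfies $M_{n,r}=\tilde M_r(T_n)$ with $T_n/n\to 1$ a.s., exactly the same monotone-interpolation argument that handles $K_n$ handles each $M_{n,r}$, and $K_{n,r}=M_{n,r}-M_{n,r+1}$ finishes it; you already introduce $\tilde M_r$ for this purpose on the Poisson side, so you should simply carry it through the de-Poissonization as well.
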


Our main theorem is an extension of Proposition \ref{karlin1} to general exchangeable random partitions.  It is an immediate consequence of Proposition \ref{karlin1} that even when the $P_j$ may be random, the condition (\ref{k1b}) implies (\ref{k1c}).  The result below says that this implication remains valid even when the convergence in (\ref{k1b}) holds only in probability.  As we will see shortly, this result has applications in coalescent theory, where it can be much easier to establish convergence in probability for $K_n$ than almost sure convergence.

\begin{Theo}\label{mainth}
Suppose $\Pi$ is an exchangeable random partition of $\N$, and define $K_n$ and $K_{n,r}$ as above.  Suppose $0 < \alpha < 1$, and suppose $\ell: (0, \infty) \rightarrow (0, \infty)$ is a slowly varying function.  If
\begin{equation}\label{s1b}
\lim_{n \rightarrow \infty} \frac{K_n}{n^{\alpha} \ell(n)} = \Gamma(1 - \alpha) \hspace{.1in}\textup{in probability}
\end{equation}
then for all $r \in \N$, we have
$$\lim_{n \rightarrow \infty} \frac{K_{n,r}}{n^{\alpha} \ell(n)} = \frac{\alpha \Gamma(r - \alpha)}{r!} \hspace{.1in}\textup{in probability}.$$
\end{Theo}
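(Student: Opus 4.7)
The strategy is to reduce to the deterministic Proposition~\ref{karlin1} by conditioning on the asymptotic frequencies $(P_j)$, using variance bounds to transfer the in-probability hypothesis to the conditional means, and then invoking Karlin--Rouault conditionally.

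First I would rule out dust by showing $\sum_j P_j = 1$ almost surely. If instead $P(\sum_j P_j < 1) > 0$, then on that positive-probability event the $\xi_k = -k$ mechanism produces of order $n$ singleton blocks (conditionally on $(P_j)$ this number is binomial with mean $n(1-\sum_j P_j)$), so $K_n/n$ is bounded below by a positive constant on a set of positive probability. Since $\alpha<1$, this contradicts $K_n/(n^\alpha\ell(n))\to\Gamma(1-\alpha)$ in probability.

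Next, write $\phi(n) = E[K_n\mid(P_j)] = \sum_j(1-(1-P_j)^n)$ and $\phi_r(n) = E[K_{n,r}\mid(P_j)]$. The negative association of the multinomial counts $N_n(j) = \#\{i\le n:\xi_i=j\}$ yields conditional variance bounds $\Var(K_n\mid(P_j)) \le \phi(n)$ and $\Var(K_{n,r}\mid(P_j))\le\phi_r(n)$. Chebyshev, combined with Borel--Cantelli along a geometric subsequence and the monotonicity of $K_n$ and of $\phi(n)$ in $n$ for interpolation, gives $K_n/\phi(n)\to 1$ almost surely conditional on $(P_j)$ (whenever $\phi(n)\to\infty$, which is forced by the hypothesis); a parallel argument, handling the absence of monotonicity in $n$ via a direct summation of conditional variances along a geometric subsequence, gives $K_{n,r}/\phi_r(n)\to 1$ almost surely conditional on $(P_j)$.

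These conditional concentrations, combined with the in-probability hypothesis on $K_n$, give convergence in probability of the $\sigma((P_j))$-measurable sequence $\phi(n)/(n^\alpha\ell(n))$ to the constant $\Gamma(1-\alpha)$. If for almost every realization of $(P_j)$ the deterministic sequence $\phi_\omega(n)/(n^\alpha\ell(n))$ converges to $\Gamma(1-\alpha)$, then the Tauberian half of Proposition~\ref{karlin1} shows that condition~(\ref{k1a}) holds for almost every $(P_j)$, whence the Abelian half gives $\phi_r(n)/(n^\alpha\ell(n))\to\alpha\Gamma(r-\alpha)/r!$ almost surely conditional on $(P_j)$, and the concentration of $K_{n,r}$ around $\phi_r(n)$ finishes the proof.

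The main obstacle is this last upgrade: in general, convergence in probability of a sequence of random variables to a constant does not entail almost sure convergence. To close the gap one must exploit the specific structure of $\phi_\omega$---a deterministic concave nondecreasing function of $n$ for each $\omega$, related to the tail counting function $g_\omega(x)=\#\{j:P_j\ge x\}$ by the Karlin--Abelian transform---combined with the subsequence characterization of convergence in probability and the regular variation of $n^\alpha\ell(n)$ to propagate subsequence convergence to the full sequence. I expect this upgrade to be the most delicate step and the source of the restriction $\alpha<1$ alluded to by the paper's separate conjecture for the near-linear regime.
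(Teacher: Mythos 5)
Your reduction steps are sound and align with the paper's setup: the dust argument for $\sum_j P_j = 1$ a.s., the conditional means $\phi(n)$ and $\phi_r(n)$ (the paper Poissonizes and works with $\Phi(t)$, $\Phi_r(t)$, but that is a cosmetic difference), and the conditional concentration $K_n/\phi(n)\to 1$ a.s.\ via Chebyshev and Borel--Cantelli along a geometric subsequence are all right. One caveat on that last point: the paper does \emph{not} claim $K_{n,r}/\phi_r(n)\to 1$ a.s.\ directly, because $K_{n,r}$ and $\phi_r(n)$ are not monotone in $n$ and your interpolation trick fails for them; instead it uses the tail sums $\sum_{s\geq r}K_{n,s}$ (which are monotone) and establishes (see display (\ref{C})) $\sum_{s\geq r}K_{n,s}/\sum_{s\geq r}\Phi_s(n)\to 1$ a.s.\ for each $r$, then differences these at the in-probability level (displays (\ref{D})--(\ref{E})). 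Your ``direct summation of conditional variances'' route for $K_{n,r}$ alone is not obviously fixable and is simply unnecessary once you use the tail sums.

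The substantial gap is the one you flag at the end, and it is not merely a ``delicate step'' --- it is the wrong route. You want to upgrade $\phi(n)/(n^{\alpha}\ell(n))\to\Gamma(1-\alpha)$ from in-probability to a.s.\ convergence, so that the deterministic Proposition~\ref{karlin1} can be applied conditionally on $(P_j)$. There is no reason this upgrade holds, and the structural facts you propose to exploit (monotonicity and concavity of $\phi_\omega$, regular variation of $n^{\alpha}\ell(n)$) do not give it: along a sub-subsequence with large gaps, the ratio $a_{n_{k_{j+1}}}/a_{n_{k_j}}$ blows up and monotone interpolation breaks. More to the point, the paper's whole construction is organized around the assumption that such an upgrade is unavailable: Lemma~\ref{GProb} proves only that $x^{\alpha}G(x)/\ell(1/x)\to 1$ \emph{in probability} (via Lemmas~\ref{Glem}--\ref{Gbarlem}, which are an in-probability Tauberian argument using Stone--Weierstrass approximation of bump functions by exponentials), and Example~\ref{newex} shows the companion Abelian implication fails when one only has in-probability convergence, so the in-probability world is genuinely more delicate than the a.s.\ one. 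The paper then finishes the proof of Theorem~\ref{mainth} without any a.s.\ statement about $\Phi$ or $G$: it fixes a finite grid of scales $k\delta/t$ with $L\leq k\leq M$, uses Lemma~\ref{GProb} at those finitely many points together with a union bound, and bounds $\Phi_r(t)/(t^{\alpha}\ell(t))$ from below by a Riemann sum that is eventually within $\eta$ of $\alpha\Gamma(r-\alpha)/r!$; the matching upper bound comes from subtracting the lower bounds for $s\neq r$ from $\Phi(t)$. To close your proof you should abandon the a.s.\ upgrade and instead develop this in-probability Tauberian argument for $G$ followed by the finite-grid Riemann-sum comparison for $\Phi_r$.
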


We prove Theorem \ref{mainth} in Section \ref{th1sec}.  It will follow from this proof (see Lemma \ref{GProb} below) that (\ref{s1b}) implies that the limit (\ref{k1a}) holds in probability.  However, the converse implication is false.  Of course, it is clear that the converse can not hold for general exchangeable random partitions because (\ref{k1a}) can hold even when $\sum_{j=1}^{\infty} p_j < 1$, in which case $K_n$ will be of order $n$ rather than of order $n^{\alpha}$.  However, as the next example shows, even under the additional condition that $\sum_{j=1}^{\infty} P_j = 1$, it is possible for the limit (\ref{k1a}) to hold in probability but for (\ref{s1b}) to fail.

\begin{Exm}\label{newex}
There exists an exchangeable random partition $\Pi$ of $\N$ whose asymptotic frequencies satisfy $\sum_{j=1}^{\infty} P_j = 1$ a.s. such that if $G(x) = \max\{j: P_j \geq x\}$, then $$\lim_{x \rightarrow 0} x^{\alpha} G(x) = 1 \hspace{.1in}\textup{in probability}$$ but $n^{-\alpha} K_n$ does not converge in probability to $\Gamma(1 - \alpha)$ as $n \rightarrow \infty$.
\end{Exm}

We describe the example in detail, and prove that it has the stated properties, in Section \ref{newexsec}.

\subsection{The case in which $K_n$ is only slightly smaller than $n$}

Proposition \ref{karlin1} and Theorem \ref{mainth} give asymptotic results for $K_{n,r}$ when $K_n$ grows like $n^{\alpha}$ for $0 < \alpha < 1$.  The result below concerns the case when $K_n$ grows just slightly slower than $n$.  This result can be obtained from results in \cite{ghp} by combining Propositions 14 and 18 with Lemma 1, Proposition 2, and the remarks before and after Proposition 2.

\begin{Prop}\label{karlin2}
Let $(p_j)_{j=1}^{\infty}$ be a deterministic sequence such that $p_1 \geq p_2 \geq \dots \geq 0$ and $\sum_{j=1}^{\infty} p_j = 1$.  For $x > 0$, let $g(x) = \max\{j: p_j \geq x\}$.  Let $\Pi$ be an exchangeable random partition of $\N$ whose asymptotic block frequencies are given by $(p_j)_{j=1}^{\infty}$ almost surely, and define $K_n$ and $K_{n,r}$ as above. 
Suppose $\ell: (0, \infty) \rightarrow (0, \infty)$ is a slowly varying function, and for $t > 0$, let $\ell_1(t) = \int_t^{\infty} \ell(s)/s \: ds$.  Suppose that
\begin{equation}\label{k2a}
\lim_{x \rightarrow 0} \frac{x g(x)}{\ell(1/x)} = 1.
\end{equation}
Then
\begin{equation}\label{k2b}
\lim_{n \rightarrow \infty} \frac{K_n}{n \ell_1(n)} = \lim_{n \rightarrow \infty} \frac{K_{n,1}}{n \ell_1(n)} = 1 \hspace{.1in}\textup{a.s.}
\end{equation}
Also, for integers $r \geq 2$,
\begin{equation}\label{k2c}
\lim_{n \rightarrow \infty} \frac{K_{n,r}}{n \ell(n)} = \frac{1}{r (r-1)} \hspace{.1in}\hspace{.1in}\textup{a.s.}
\end{equation}
\end{Prop}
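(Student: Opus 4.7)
The plan is to pass through the Poissonized occupancy scheme. Introduce an independent Poisson$(t)$ variable $N_t$ and let $\tilde K(t)$ and $\tilde K_r(t)$ be the analogues of $K_n$ and $K_{n,r}$ when $N_t$ balls are placed independently into boxes with probabilities $p_j$. In this model the box occupancies are independent Poisson$(tp_j)$ variables, so everything reduces to analyzing sums of independent indicators indexed by $j$, with variances controlled by expectations.

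Writing $\nu = \sum_j \delta_{p_j}$, the hypothesis (\ref{k2a}) is that $g(x) = \nu[x,1] \sim \ell(1/x)/x$ as $x \to 0^+$. One has
$$E[\tilde K_r(t)] = \frac{1}{r!}\int_0^1 (tx)^r e^{-tx}\,\nu(dx), \qquad E[\tilde K(t)] = \int_0^1 \bigl(1 - e^{-tx}\bigr)\,\nu(dx).$$
For $r \geq 2$, integration by parts against $g$, the substitution $y = tx$, and the slow-variation identity $\ell(t/y)/\ell(t) \to 1$ yield $E[\tilde K_r(t)] \sim \frac{t\ell(t)}{r!}\int_0^\infty y^{r-2}e^{-y}(r-y)\,dy = \frac{t\ell(t)}{r(r-1)}$, since $\int_0^\infty y^{r-2}e^{-y}(r-y)\,dy = (r-2)!$. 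For $r=1$ and for $E[\tilde K(t)]$, the analogous computation produces $\ell_1(t) = \int_t^\infty \ell(s)/s\,ds$ as the leading term, with a competing contribution of order $t\ell(t)$ that is negligible by the standard slow-variation fact $\ell(t) = o(\ell_1(t))$ whenever $\ell_1$ is finite.

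To upgrade these expectation asymptotics to almost sure convergence, I would exploit independence in the Poissonized model: $\Var \tilde K(t) \leq E[\tilde K(t)]$ and $\Var \tilde K_r(t) \leq E[\tilde K_r(t)]$ give $L^2$ convergence after normalization. For the nondecreasing quantity $\tilde K(t)$, a sandwich along a geometric subsequence $t_n = (1+\epsilon)^n$ yields a.s.\ convergence; for the non-monotone $\tilde K_r(t)$, one applies Borel--Cantelli along $(t_n)$ and bounds $\sup_{t \in [t_n, t_{n+1}]} |\tilde K_r(t) - \tilde K_r(t_n)|$ by the Poisson$(\epsilon t_n)$ number of balls added in the interval, letting $\epsilon \to 0$ at the end.

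Finally, de-Poissonize. Because $N_t$ concentrates tightly around $t$ and each of $K_n$ and $K_{n,r}$ changes by at most one per ball added, $K_n$ and $K_{n,r}$ differ from $\tilde K(n)$ and $\tilde K_r(n)$ by $O(|N_n - n|)$, which is negligible against the normalizers $n\ell_1(n)$ and $n\ell(n)$. The main obstacle, in my view, is the joint treatment of $K_n$ and $K_{n,1}$: the normalization $n\ell_1(n)$ emerges only after the delicate cancellation between the $\ell_1$- and $\ell$-contributions noted above, and securing it cleanly requires Karamata's representation together with the Monotone Density Theorem for slowly varying functions.
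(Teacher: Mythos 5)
Your overall plan---Poissonize, compute expectations by integrating $g$ against kernels $y^{r-1}e^{-y}$ after the substitution $y=tx$, extract the $\ell$ and $\ell_1$ asymptotics with $\ell(t)=o(\ell_1(t))$, control variances by independence, pass to a.s.\ limits along a geometric subsequence, and de-Poissonize---is indeed the right shape, and it is essentially what the survey \cite{ghp} does. (The paper itself gives no self-contained argument; it simply cites Propositions 14 and 18 and Lemma~1/Proposition~2 of \cite{ghp}, whose proofs follow your outline.) Your expectation asymptotics are correct: $E[\tilde K_r(t)]\sim t\ell(t)/[r(r-1)]$ for $r\ge 2$ and $E[\tilde K(t)]\sim E[\tilde K_1(t)]\sim t\ell_1(t)$, and the variance bound $\Var\le E$ from Poisson independence is the right engine.

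The gap is in the interpolation step for the non-monotone $\tilde K_r$. You bound $\sup_{t\in[t_n,t_{n+1}]}|\tilde K_r(t)-\tilde K_r(t_n)|$ by the number of balls added, which is Poisson$(\epsilon t_n)$ and hence concentrates at $\epsilon t_n$. But here the normalizer is $t_n\ell(t_n)$, and since $\ell_1$ is finite one in fact has $\ell(t)\to 0$ (as $\ell=o(\ell_1)$ and $\ell_1(t)\to 0$). Therefore $\epsilon t_n/(t_n\ell(t_n))=\epsilon/\ell(t_n)\to\infty$ for \emph{every} fixed $\epsilon>0$, so the crude one-change-per-ball bound is useless here; ``letting $\epsilon\to 0$ at the end'' does not repair an argument that fails at each fixed $\epsilon$. (This is precisely the respect in which the present regime differs from the $n^\alpha$ regime, where $\epsilon t_n$ \emph{is} negligible against $t_n^\alpha\ell(t_n)$ only after optimizing; here it is never negligible against $t_n\ell(t_n)$.) The standard fix, and the one \cite{ghp} and the present paper implicitly rely on (see display~(\ref{C}) in Section~\ref{th1sec}), is to work with the monotone tail counts $\tilde K_{\ge r}(t)=\sum_{s\ge r}\tilde K_s(t)$, which are nondecreasing in $t$; the sandwich along $t_n=(1+\epsilon)^n$ then applies verbatim because $E[\tilde K_{\ge r}(t)]$ is regularly varying of index $1$, after which one recovers $\tilde K_r=\tilde K_{\ge r}-\tilde K_{\ge r+1}$. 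Your de-Poissonization step is fine, since $|N_n-n|=O(n^{2/3})$ a.s.\ is negligible against $n\ell(n)$; and your appeal to slow-variation machinery for the $\ell_1$--$\ell$ cancellation is appropriate, though it is the monotone-tail reduction, not the cancellation, that is the missing ingredient.
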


Our next result addresses a question that is left open by Proposition \ref{karlin2}.  Although (\ref{k2a}) implies (\ref{k2b}) and (\ref{k2c}), one can also ask whether there is a result parallel to Theorem \ref{mainth} in which we obtain asymptotic results for $K_{n,r}$ just from the asymptotics of $K_n$.  However, the example below, which we describe in detail in Section \ref{th2sec}, shows that the condition $K_n/(n \ell_1(n)) \rightarrow 1$ a.s. is not sufficient to imply that the convergence in (\ref{k2c}) holds, even in probability.  Note that in the notation of Proposition \ref{karlin2}, if $\ell(t) = (\log t)^{-2}$ for all $t \geq T > 1$, then $\ell_1(t) = (\log t)^{-1}$ for all $t \geq T$.

\begin{Exm}\label{boszex}
There exists an exchangeable random partition $\Pi$ of $\N$ such that if $K_n$ and $K_{n,r}$ are defined as above, then $$\lim_{n \rightarrow \infty} \frac{(\log n) K_n}{n} = 1 \hspace{.1in}\textup{a.s.},$$ but for all integers $r \geq 2$, the quantity $n^{-1} (\log n)^2 K_{r,n}$ does not converge to $1/[r(r-1)]$ in probability as $n \rightarrow \infty$.
\end{Exm}

\subsection{Applications to coalescent theory and population genetics}

At first glance, Theorem \ref{mainth} may appear to be only a very minor technical improvement over Proposition \ref{karlin1}.  However, Theorem \ref{mainth} has significant implications for coalescent theory, where it can be much easier to prove convergence in probability and establish (\ref{s1b}) than to prove the almost sure convergence needed to obtain (\ref{k1b}).

Suppose we take a sample of size $n$ from a population and follow the ancestral lines of the sampled individuals backwards in time.  The ancestral lines will coalesce until all of the sampled individuals are traced back to a single common ancestor.  This process can be modeled by a stochastic process taking its values in the set of partitions of $\{1, \dots, n\}$.  The standard coalescent model is Kingman's coalescent \cite{king82}, in which it is assumed that only two lineages ever merge at a time and each transition that involves the merging of two lineages happens at rate one.  This means that when there are $b$ lineages, the amount of time before the next merger has an exponential distribution with rate $\binom{b}{2}$.  

Within the last decade, there has been considerable interest in alternative models of coalescence, called coalescents with multiple mergers or $\Lambda$-coalescents, that allow many ancestral lines to merge at once.  Such processes were introduced by Pitman \cite{pit99} and Sagitov \cite{sag99}.  If $\Lambda$ is a finite measure on $[0,1]$, then the $\Lambda$-coalescent is the coalescent process having the property that whenever there are $b$ lineages, each transition that involves $k$ lineages merging into one happens at rate $$\lambda_{b,k} = \int_0^1 x^{k-2} (1-x)^{b-k} \: \Lambda(dx).$$  Multiple mergers of ancestral lines could arise in populations with large family sizes, as many ancestral lines could be traced back to the individual that had a large number of offspring.  They could also arise as a result of natural selection because many ancestral lines could get traced back to an individual that had a beneficial mutation which spread rapidly to a large fraction of the population.

\begin{figure}
\hspace{-0.8in}\includegraphics{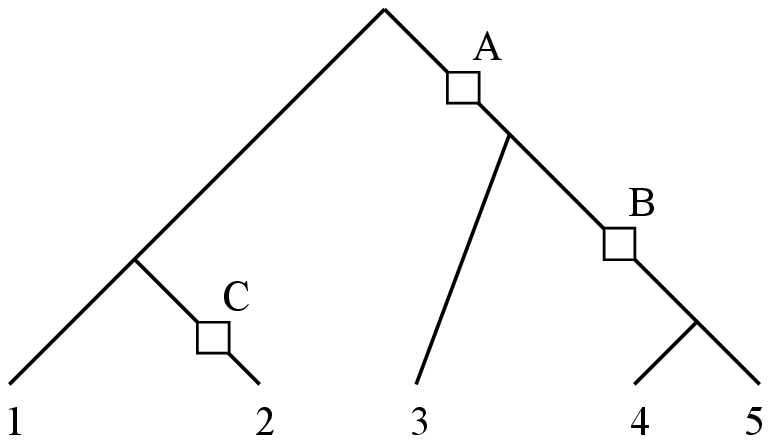}
\caption{}
\vspace{-9in}
{\small Figure 1: This figure shows the genealogy of five sampled individuals.  The boxes represent mutations.  Individual 1 inherited no mutations, individual 2 inherited mutation C, individual 3 inherited mutation A, and individuals 4 and 5 inherited mutations A and B.  Therefore, the allelic partition is $\Pi_5 = \{ \{1\}, \{2\}, \{3\}, \{4, 5\}\}$.  We have $K_5 = 4$.  Also, $K_{5,1} = 3$, $K_{5,2} = 1$, and $K_{5,3} = K_{5,4} = K_{5,5} = 0$.}
\end{figure}

To model mutations, we put marks representing mutations at points of a rate $\theta$ Poisson process along each branch of the coalescent tree.  One can then define a random partition $\Pi_n$ of $\{1, \dots, n\}$, often called the allelic partition, by declaring $i$ and $j$ to be in the same block of $\Pi_n$ if and only if the $i$th and $j$th sampled individuals inherit the same mutations.
These partitions $\Pi_n$ can be defined consistently as $n$ increases simply by sampling more individuals, so by Kolmogorov's Extension Theorem, on some probability space there is an exchangeable random partition $\Pi$ of $\N$ such that $\Pi_n$ is the restriction to $\Pi$ of $\{1, \dots, n\}$.  When the underlying coalescent process is Kingman's coalescent, the distribution of $\Pi_n$ is given by the Ewens Sampling Formula \cite{ewens}.  The probability that $\Pi_n$ has $a_j$ blocks of size $j$ for $j = 1, \dots, n$ is given by $$\frac{n!}{2 \theta (2 \theta + 1) \dots (2 \theta + n - 1)} \prod_{j=1}^n \bigg( \frac{2 \theta}{j} \bigg)^{a_j} \frac{1}{a_j!}.$$  When the underlying coalescent process is some other $\Lambda$-coalescent, there is no simple expression for the distribution of $\Pi$.  However, defining $K_n$ and $K_{n,r}$ from $\Pi_n$ as above, it was shown in \cite{bbs1, bbs2} that if $\Lambda$ is the Beta$(\alpha, 2 - \alpha)$ distribution with $0 < \alpha < 1$, then
\begin{equation}\label{beta1}
\lim_{n \rightarrow \infty} \frac{K_n}{n^{\alpha}} = \frac{\theta (2 - \alpha) (1 - \alpha) \Gamma(2 - \alpha)}{\alpha} \hspace{.1in}\textup{in probability}.
\end{equation}
It was then shown in \cite{bbs2} that
\begin{equation}\label{beta2}
\lim_{n \rightarrow \infty} \frac{K_{n,r}}{n^{\alpha}} = \frac{\theta (2 - \alpha) (1 - \alpha)^2 \Gamma(r - \alpha)}{r!} \hspace{.1in}\textup{in probability}.
\end{equation}
Note that $\alpha$ here corresponds to $2 - \alpha$ in \cite{bbs1} and \cite{bbs2}.  The proof of (\ref{beta2}) in \cite{bbs2} is rather technical, exploiting a connection between beta coalescents and the genealogy of continuous-state branching processes.  However, Theorem \ref{mainth} makes it possible to deduce (\ref{beta2}) immediately from (\ref{beta1}).  We also note that the convergence in (\ref{beta1}) was later shown in \cite{bbl} to hold almost surely, allowing (\ref{beta2}) to be established via Proposition \ref{karlin1}.  On the other hand, if $\Lambda$ is the uniform distribution on $[0, 1]$, corresponding to $\alpha = 1$ above, it was shown in \cite{bago07}, building on work of \cite{dimr07}, that
\begin{equation}\label{bosz1}
\lim_{n \rightarrow \infty} \frac{(\log n) K_n}{n} = \theta \hspace{.1in}\textup{in probability}.
\end{equation}
It was also shown in \cite{bago07} that 
\begin{equation}\label{bosz2}
\lim_{n \rightarrow \infty} \frac{(\log n)^2 K_{n,r}}{n} = \frac{\theta}{k(k-1)} \hspace{.1in}\textup{in probability}.
\end{equation}
However, Example \ref{boszex} establishes that (\ref{bosz1}) does not imply (\ref{bosz2}).  Indeed, the proof of (\ref{bosz2}) in \cite{bago07} involves a detailed analysis of a Markov chain on different time scales.

\subsection{A model of a growing population}

To illustrate another application of Theorem \ref{mainth}, we consider the following model of a population that grows in size over time.  Fix $\gamma > 0$ and a positive integer $N$.  Assume that for each positive integer $k$, there are $\lceil N k^{-\gamma} \rceil$ individuals in generation $-k$.  For simplicity, assume that the number of individuals in generation zero is the same as the number of individuals in generation -1, so there are $N$ individuals in generations $0$ and $1$ but fewer in earlier generations.  To give the model a genealogical structure, we assume, as in the standard Wright-Fisher model, that each individual chooses its parent uniformly at random from the individuals in the previous generation.

Now sample $n$ individuals from the population at time zero, and follow their ancestral lines backwards in time.  We can represent the genealogy of these sampled individuals by a coalescent process $(\Psi_{N,n}(t), t \geq 0)$ taking its values in the set of partitions of $\{1, \dots, n\}$, where two integers $i$ and $j$ are in the same block of the partition $\Psi_{N,n}(t)$ if and only if the $i$th and $j$th individuals in the sample have the same ancestor at time $-\lfloor N^{1/(1 + \gamma)} t \rfloor$.  It is easy to check that as $N \rightarrow \infty$, these processes converge to a coalescent process $(\Psi_n(t), t \geq 0)$ having the property that at time $t$, two lineages (that is, two blocks of the partition) are merging at rate $t^{\gamma}$.  To see this, note that in generation $N^{1/(1 + \gamma)} t$, two individuals have the same ancestor with probability approximately $N^{-1} (N^{1/(1 + \gamma)} t)^{\gamma}$, and multiplying this expression by the time-scaling factor $N^{1/(1 + \gamma)}$ gives the coalescence rate of $t^{\gamma}$.  Note that $(\Psi_n(t), t \geq 0)$ is a time-inhomogeneous Markov chain.

The process $(\Psi_n(t), t \geq 0)$ can be obtained as a time-change of Kingman's coalescent.
Indeed, let $(\Theta_n(t), t \geq 0)$ be Kingman's coalescent started with $n$ lineages.  That is $(\Theta_n(t), t \geq 0)$ is a continuous-time, time-homogeneous Markov chain taking values in the set of partitions of $\{1, \dots, n\}$ such that $\Theta_n(0) = \{\{1\}, \{2\}, \dots, \{n\}\}$, each transition that involves merging two blocks of the partition happens at rate one, and no other transitions are possible.  Then we can define
\begin{equation}\label{timechange}
\Psi_n(t) = \Theta_n \bigg( \frac{t^{\gamma + 1}}{\gamma + 1} \bigg).
\end{equation}
The time change makes $\Psi_n$ a time-inhomogeneous Markov chain in which at time $t$, each pair of blocks is merging at rate $t^{\gamma}$.

We will now work with the coalescent process $(\Psi_n(t), t \geq 0)$ and, as before, put mutations along each lineage at times of a rate $\theta$ Poisson process.  Then define the partition $\Pi_n$ such that $i$ and $j$ are in the same block of $\Pi_n$ if and only if the $i$th and $j$th sampled individuals inherit the same mutations.  The partitions $\Pi_n$ can be defined consistently as $n$ varies, so there is an exchangeable random partition $\Pi$ of $\N$ such that $\Pi_n$ is the restriction of $\Pi$ to $\{1, \dots, n\}$.  Define $K_n$ and $K_{n,r}$ as before.  We obtain the following result.

\begin{Theo}\label{coalth}
Consider the time-inhomogeneous coalescent process with mutations described above.  Let $\alpha = \gamma/(1 + \gamma) \in (0, 1)$.  We have
\begin{equation}\label{growpop}
\lim_{n \rightarrow \infty} \frac{K_n}{n^{\alpha}} = \frac{\theta 2^{1-\alpha} (1-\alpha)^{\alpha} \pi}{\sin(\pi \alpha)} \hspace{.1in}\textup{in probability}
\end{equation}
and for all $r \in \N$,
\begin{equation}\label{growpop2}
\lim_{n \rightarrow \infty} \frac{K_{n,r}}{n^{\alpha}} = \frac{\theta 2^{1-\alpha} (1-\alpha)^{\alpha} \pi}{\sin(\pi \alpha)} \cdot \frac{\alpha \Gamma(r - \alpha)}{r! \Gamma(1 - \alpha)} \hspace{.1in}\textup{in probability}.
\end{equation}
\end{Theo}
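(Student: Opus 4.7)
The plan is to use Theorem \ref{mainth} to reduce (\ref{growpop2}) to (\ref{growpop}), and then to prove (\ref{growpop}) directly. Writing $C = \theta 2^{1-\alpha}(1-\alpha)^\alpha \pi/\sin(\pi\alpha)$ for the constant in (\ref{growpop}), if we have (\ref{growpop}) in hand, then taking the constant (hence slowly varying) function $\ell(n) \equiv C/\Gamma(1-\alpha)$ gives $K_n/(n^\alpha \ell(n)) \to \Gamma(1-\alpha)$ in probability. Theorem \ref{mainth} then immediately yields $K_{n,r}/(n^\alpha \ell(n)) \to \alpha \Gamma(r-\alpha)/r!$ in probability, which rearranges to (\ref{growpop2}). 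So the remainder of the argument focuses on (\ref{growpop}).

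To analyze $K_n$, I would use the \emph{freezing} representation of the allelic partition: run $(\Psi_n(t), t \geq 0)$ together with rate-$\theta$ Poisson mutation clocks attached to each current lineage, and declare a lineage \emph{frozen} at its first mutation, after which it plays no further role. Writing $A_n(t)$ for the number of active (non-frozen) lineages at time $t$, one checks that $K_n$ equals the total number of mutation events. Compensating the mutation counting process gives $K_n = \theta \int_0^\infty A_n(t)\,dt + N_n$, where $N_n$ is a mean-zero martingale with $E[N_n^2] = E[K_n]$. Since $K_n$ is dominated by the number of mutations in the unfrozen process (Poisson with mean $\theta \int_0^\infty |\Psi_n(t)|\,dt$), and $|\Psi_n(t)| = |\Theta_n(t^{\gamma+1}/(\gamma+1))|$ together with the classical Kingman-coalescent estimate $E|\Theta_n(s)| \le (2/s) \wedge n$ yield $E\int_0^\infty |\Psi_n(t)|\,dt = O(n^\alpha)$, we obtain $E[K_n] = O(n^\alpha)$, hence $N_n = o_P(n^\alpha)$. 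It therefore suffices to show $\theta n^{-\alpha}\int_0^\infty A_n(t)\,dt \to C$ in probability.

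The mean-field trajectory $a(t)$ of $A_n$ solves $a' = -\theta a - t^\gamma a^2/2$ with $a(0) = n$; the substitution $y = 1/a$ linearizes this to $y' - \theta y = t^\gamma/2$, with explicit solution $y(t) = e^{\theta t}/n + (e^{\theta t}/2)\int_0^t e^{-\theta u}u^\gamma\,du$. The dominant contribution to $\int_0^\infty a(t)\,dt$ comes from the scale $t = O(n^{-(1-\alpha)})$, where $\theta t \to 0$ and $y(t) \sim 1/n + t^{\gamma+1}/(2(\gamma+1))$; substituting $v = (n/(2(\gamma+1)))^{1/(\gamma+1)}\,t$ then gives
\[
\int_0^\infty a(t)\,dt \sim n^\alpha (2(\gamma+1))^{1-\alpha}\int_0^\infty \frac{dv}{1+v^{\gamma+1}} = n^\alpha \cdot \frac{2^{1-\alpha}(1-\alpha)^\alpha \pi}{\sin(\pi\alpha)} = \frac{C}{\theta}\,n^\alpha,
\]
using $\int_0^\infty dv/(1+v^m) = (\pi/m)/\sin(\pi/m)$ with $m = \gamma+1 = 1/(1-\alpha)$ together with $\sin(\pi(1-\alpha)) = \sin(\pi\alpha)$ and $(\gamma+1)^{-\alpha} = (1-\alpha)^\alpha$.

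The main obstacle is to upgrade this mean-field computation to a convergence-in-probability statement for $\int_0^\infty A_n(t)\,dt$. The evolution equation $\frac{d}{dt}E[A_n] = -\theta E[A_n] - t^\gamma E[\binom{A_n}{2}]$, combined with $\Var A_n \ge 0$, gives an upper bound on $E[A_n(t)]$ comparable to $a(t)$; a matching lower bound follows provided $\Var A_n(t)$ is of smaller order than $a(t)^2$. I would handle this by splitting the time axis into a Kingman-dominated phase $t \in [0, T_n]$ with $T_n \gg n^{-(1-\alpha)}$ and $\theta T_n \to 0$, and a mutation-dominated tail. On the tail, $A_n$ decays at rate at least $\theta$, so $\int_{T_n}^\infty A_n(t)\,dt = O_P(1)$. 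On $[0, T_n]$, $A_n(t)$ can be bounded in terms of $|\Psi_n(t)| = |\Theta_n(t^{\gamma+1}/(\gamma+1))|$, whose fluctuations around the deterministic profile $2(\gamma+1)/t^{\gamma+1}$ are controlled by classical second-moment estimates for Kingman's coalescent; integrating these over the Kingman phase produces a variance bound $\Var \int_0^{T_n} A_n(t)\,dt = O(n^{2\alpha - 1}) = o(n^{2\alpha})$. Chebyshev's inequality then yields convergence in probability, completing the proof.
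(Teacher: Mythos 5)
Your reduction of (\ref{growpop2}) to (\ref{growpop}) via Theorem \ref{mainth} with a constant slowly varying function is exactly the paper's step, so that part is fine. For (\ref{growpop}) itself you take a genuinely different route, and the mean-field arithmetic does land on the right constant: with $m=\gamma+1=1/(1-\alpha)$, the substitution gives $n^\alpha(2(\gamma+1))^{1-\alpha}\cdot\pi(1-\alpha)/\sin(\pi\alpha) = n^\alpha 2^{1-\alpha}(1-\alpha)^\alpha\pi/\sin(\pi\alpha)$, matching the paper.

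The paper's strategy is static rather than dynamical: it works with the \emph{full} coalescent tree, writes $L_n=\sum_k k(U_{k-1}-U_k)$ for the total branch length, and proves $L_n/n^\alpha\to 2^{1-\alpha}(1-\alpha)^\alpha\pi/\sin(\pi\alpha)$ in probability by (i) using Kolmogorov's maximal inequality to show the Kingman hitting times $T_k$ concentrate around $2/k-2/n$ uniformly over $k\geq n^{3/4}$ (Lemma \ref{Kingtube}), (ii) pushing that through the explicit time change $g$ and a Riemann-sum-to-integral comparison to get the truncated length $L_n'$ (Lemma \ref{Lprimelem}), and (iii) showing $L_n-L_n'=o_P(n^\alpha)$ (Lemma \ref{LnLem}). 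Then $S_n\mid L_n$ is Poisson$(\theta L_n)$, giving Corollary \ref{SCor}, and a separate lemma controls $S_n-K_n$ by bounding the number of mutations that are not the most recent mutation of any sampled individual. The clean feature of this route is that mutations and coalescence events are decoupled: all the concentration is about a time-changed Kingman coalescent, with mutations layered on afterward as a conditionally Poisson count.

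Your route couples the two sources of randomness in a single killed coalescent, which is elegant but leaves substantial gaps as written. First, the claim that ``$K_n$ equals the total number of mutation events'' in the freezing process needs qualification: each freeze produces one block, and there may be one further block of mutation-free individuals, so $K_n$ equals the number of freezes up to a $\pm 1$; and the fact that the allelic partition generated by the original tree-with-marks agrees in law with the partition generated by the freezing coalescent is itself a lemma (a consequence of the consistency of Kingman's coalescent under removal of lineages together with the observation that two individuals share a block iff they coalesce before either ancestral line mutates). Second, ``on the tail $\int_{T_n}^\infty A_n(t)\,dt = O_P(1)$'' is not correct: the freezing rate gives $E\bigl[\int_{T_n}^\infty A_n\,dt \mid A_n(T_n)\bigr]\leq A_n(T_n)/\theta$, and $A_n(T_n)$ is of order $T_n^{-(\gamma+1)}$, which diverges since $T_n\to 0$; one only gets $o_P(n^\alpha)$ after choosing $T_n$ suitably. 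Third, and most importantly, the concentration of $\int_0^{T_n}A_n(t)\,dt$ — the step you yourself identify as the main obstacle — is stated rather than proved: the variance bound $\Var\int_0^{T_n}A_n\,dt = O(n^{2\alpha-1})$ is asserted without a computation of $\Cov(A_n(s),A_n(t))$, and the self-consistent closure of the ODE for $E[A_n]$ (requiring $\Var A_n(t)=o(E[A_n(t)]^2)$ uniformly on $[0,T_n]$) is also only indicated. None of these look unfixable, but they constitute real work; by contrast the paper's Kolmogorov-maximal-inequality argument for the $T_k$'s disposes of all the concentration in one place. As it stands your proposal is a plausible outline with the right asymptotic constant, but not yet a proof.
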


Of course, in view of Theorem \ref{mainth}, equation (\ref{growpop2}) follows immediately from (\ref{growpop}), so we need only prove (\ref{growpop}), which we do in Section \ref{appsec}.

Note that for both the beta coalescent and for the time-inhomogeneous coalescent described above, we have
\begin{equation}\label{afs}
\lim_{n \rightarrow \infty} \frac{K_{n,r}}{K_n} = \frac{\alpha \Gamma(r - \alpha)}{r! \Gamma(1 - \alpha)} \hspace{.1in}\textup{in probability}.
\end{equation}
The left-hand side of (\ref{afs}) is the fraction of blocks of the allelic partition having size $r$, and the sequence of numbers $K_{n,r}$ for $1 \leq r \leq n$ is often called the allele frequency spectrum.  Thus, (\ref{afs}) says that we get the same allele frequency spectrum for these two models, as we would with any coalescent model having the property that $K_n$ grows like $n^{\alpha}$.

One of the central goals of population genetics is to use information about a sample from a current population to obtain information about the history of the population.  Distinguishing among various factors that could cause the genealogy of the population to differ from Kingman's coalescent can be challenging.  See, for example, \cite{jensen} and \cite{stephan} for a discussion of the issue of distinguishing the effects of natural selection from demographic factors such as changing population size.  Therefore, from the perspective of population genetics, Theorem \ref{coalth} is perhaps disappointing.  Theorem \ref{coalth} shows that the allele frequency spectrum that arises when the genealogy is given by a beta coalescent, as could be the case for populations with large family sizes, could also arise in a population whose size is increasing over time.  Thus, one can not necessarily use the allele frequency spectrum to distinguish populations with large family sizes from populations that are increasing in size.  In general, Proposition \ref{karlin1} and Theorem \ref{mainth} suggest that the same allele frequency spectrum may arise in a wide variety of models, and thus may explain part of the difficulty in distinguishing among various factors that could cause the genealogy of a population to differ from Kingman's coalescent.

\section{Proof of Theorem \ref{mainth}}\label{th1sec}

Throughout this section, we assume that $0 < \alpha < 1$ and that $\Pi$ is an exchangeable random partition.  We define $K_n$ and $K_{n,r}$ as in Theorem \ref{mainth}.  We assume that $\ell: (0, \infty) \rightarrow (0, \infty)$ is a slowly varying function and that (\ref{s1b}) holds.  We denote by $P_1 \geq P_2 \geq \dots$ the asymptotic frequencies of the blocks of $\Pi$.  Note that (\ref{s1b}) implies that $\sum_{j=1}^{\infty} P_j = 1$ a.s. because $\liminf_{n \rightarrow \infty} n^{-1} K_n > 0$ almost surely on the event that $\sum_{j=1}^{\infty} P_j < 1$.  For $x > 0$, define $G(x) = \max\{j: P_j \geq x\}$, which is a random variable because the $P_j$ are random.

At times in the proof of Theorem \ref{mainth}, it will be useful to use a technique called Poissonization.  Let $(N(t), t \geq 0)$ be a rate one Poisson process, so that $N(t)$ has the Poisson distribution with mean $t$ for all $t$.  Define the random variable $$\Phi(t) = E[K_{N(t)}|(P_j)_{j=1}^{\infty}].$$  Likewise, for positive integers $r$, define $$\Phi_r(t) = E[K_{N(t), r}|(P_j)_{j=1}^{\infty}].$$  We have (see the proof of Proposition 17 in \cite{ghp}),
\begin{equation}\label{A}
\Phi(t) = t \int_0^{\infty} e^{-tx} G(x) \: dx \hspace{.1in}\textup{a.s.}
\end{equation}
Also,
\begin{equation}\label{F}
\Phi_r(t) = \frac{t^r}{r!} \sum_{j=1}^{\infty} P_j^r e^{-t P_j} \hspace{.1in}\textup{a.s.}
\end{equation}
By conditioning on $(P_j)_{j=1}^{\infty}$ and applying Lemma 1 and Proposition 2 of \cite{ghp}, we get
\begin{equation}\label{B}
\lim_{n \rightarrow \infty} \frac{K_n}{\Phi(n)} = 1 \hspace{.1in}\textup{a.s.}
\end{equation}
Using the remarks following Proposition 2 of \cite{ghp}, we have for all positive integers $r$,
\begin{equation}\label{C}
\lim_{n \rightarrow \infty} \frac{\sum_{s=r}^{\infty} K_{n,s}}{\sum_{s=r}^{\infty} \Phi_s(n)} = 1 \hspace{.1in}\textup{a.s.}
\end{equation}

Lemma \ref{slowvar} below, known as Potter's Theorem, is Theorem 1.5.6(i) of \cite{regvar} and gives some bounds on slowly varying functions.  Note that since Theorem \ref{mainth} only concerns the values of $\ell(n)$ for $n \in \N$, we may and will assume, here and throughout this section, that $\ell$ is bounded away from zero and infinity on $(0, x]$ for any $x > 0$.

\begin{Lemma}\label{slowvar}
Suppose $\ell: (0, \infty) \rightarrow (0, \infty)$ is a slowly varying function.  Let $\delta > 0$.  There exists a positive number $x_0(\delta)$ such that if $x \geq x_0(\delta)$ and $\lambda \geq 1$, then
\begin{equation}\label{x0eq}
\frac{1}{(1 + \delta) \lambda^{\delta}} \leq \frac{\ell(\lambda x)}{\ell(x)} \leq (1 + \delta) \lambda^{\delta}.
\end{equation}
Also, there exists a constant $C > 0$ such that $\ell(x) \geq C x^{-\delta}$ for all $x \geq x_0(\delta)$.
\end{Lemma}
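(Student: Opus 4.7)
My plan is to derive both bounds from the Karamata representation theorem, which states that any slowly varying $\ell$ can be written, for $x$ sufficiently large, as
$$\ell(x) = c(x) \exp\Bigl(\int_a^x \frac{\epsilon(t)}{t}\,dt\Bigr),$$
with $c(x) \to c_0 \in (0,\infty)$ and $\epsilon(t) \to 0$ as $t \to \infty$. I would invoke this representation as a known result (it is a standard companion to Theorem 1.5.6 in \cite{regvar}). The point is that the ratio $\ell(\lambda x)/\ell(x)$ factors cleanly as
$$\frac{\ell(\lambda x)}{\ell(x)} = \frac{c(\lambda x)}{c(x)} \exp\Bigl(\int_x^{\lambda x} \frac{\epsilon(t)}{t}\,dt\Bigr),$$
so the task reduces to estimating each of the two factors.

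Given $\delta > 0$, I would pick an auxiliary $\delta' \in (0,\delta)$ (small enough that $(1+\delta')^2 \leq 1 + \delta$), and choose $x_0(\delta)$ so large that for all $x \geq x_0(\delta)$: first, $|\epsilon(t)| \leq \delta'$ for every $t \geq x$, so the integral in the exponent is bounded in absolute value by $\delta' \log \lambda$ and the exponential factor lies in $[\lambda^{-\delta'},\lambda^{\delta'}]$; second, $c(\lambda x)/c(x) \in [(1+\delta')^{-1},\, 1+\delta']$ uniformly in $\lambda \geq 1$, which holds because $c \to c_0 > 0$. Multiplying the two bounds yields
$$\frac{1}{(1+\delta')\lambda^{\delta'}} \leq \frac{\ell(\lambda x)}{\ell(x)} \leq (1+\delta')\lambda^{\delta'} \leq (1+\delta)\lambda^{\delta},$$
and symmetrically for the lower bound. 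This establishes the first inequality.

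For the final claim, I would apply the lower bound just obtained with the specific choice $\lambda = x/x_0(\delta)$, giving, for $x \geq x_0(\delta)$,
$$\ell(x) \geq \frac{\ell(x_0(\delta))}{1+\delta}\Bigl(\frac{x_0(\delta)}{x}\Bigr)^{\delta} = C x^{-\delta}$$
with $C = \ell(x_0(\delta))\, x_0(\delta)^{\delta}/(1+\delta) > 0$. The only delicate point in the whole argument is the bookkeeping around constants: one must absorb the nonnegligible factor $c(\lambda x)/c(x)$ into the $(1+\delta)\lambda^{\delta}$ envelope, which is why it is convenient to prove the estimate first with $\delta'$ and then inflate to $\delta$. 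An alternative, more hands-on route would bypass the Karamata representation in favor of the Uniform Convergence Theorem on a compact interval $[1,\lambda_0]$ and then iterate by writing $\lambda = \lambda_0^k \mu$, but the representation-based approach seems shorter and avoids separately handling the ``large $\lambda$'' regime.
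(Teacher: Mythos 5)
The paper does not prove Lemma~\ref{slowvar} at all: it simply quotes it as Potter's Theorem, Theorem 1.5.6(i) of \cite{regvar}, so there is no paper proof to compare against. Your derivation from the Karamata representation theorem is a correct and standard way to establish Potter's bounds, and the reduction of the second assertion to the first by choosing $\lambda = x/x_0(\delta)$ is exactly right. One small remark: the auxiliary hypothesis $(1+\delta')^2 \le 1+\delta$ is never actually needed in your argument. Once the exponential factor lies in $[\lambda^{-\delta'},\lambda^{\delta'}]$ and the $c$-ratio lies in $[(1+\delta')^{-1},1+\delta']$, their product is sandwiched between $(1+\delta')^{-1}\lambda^{-\delta'}$ and $(1+\delta')\lambda^{\delta'}$, and this is already contained in the target interval $[(1+\delta)^{-1}\lambda^{-\delta},(1+\delta)\lambda^{\delta}]$ for every $\lambda \ge 1$ as soon as $\delta' < \delta$; no squaring is involved. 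Your alternative sketch via the Uniform Convergence Theorem on a compact $\lambda$-window followed by iteration is also a viable route (it is closer to how some texts prove Potter's Theorem without invoking the representation), but as you say, the representation-based argument is shorter and handles large $\lambda$ automatically through the integral bound.
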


\begin{Lemma}\label{Glem}
We have $$\lim_{t \rightarrow \infty} \frac{t^{1 - \alpha}}{\ell(t)} \int_0^{\infty} e^{-tx} G(x) \: dx = \Gamma(1 - \alpha) \hspace{.1in}\textup{in probability}.$$
\end{Lemma}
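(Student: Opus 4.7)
The plan is to recognize the integral in the lemma as the Poissonized mean block count $\Phi(t)$ and then reduce the problem, first to integer times via (\ref{B}) and the hypothesis (\ref{s1b}), and then back to real times via monotonicity of $\Phi$ and slow variation of $\ell$. Concretely, equation (\ref{A}) gives
$$\frac{t^{1-\alpha}}{\ell(t)} \int_0^{\infty} e^{-tx} G(x) \, dx = \frac{\Phi(t)}{t^\alpha \ell(t)},$$
so it suffices to prove that $\Phi(t)/(t^\alpha \ell(t)) \to \Gamma(1-\alpha)$ in probability as $t \to \infty$.

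For the integer case, I would just combine (\ref{s1b}) and (\ref{B}). Writing
$$\frac{\Phi(n)}{n^\alpha \ell(n)} = \frac{K_n}{n^\alpha \ell(n)} \cdot \frac{\Phi(n)}{K_n},$$
the first factor tends to $\Gamma(1-\alpha)$ in probability by hypothesis, and the second tends to $1$ almost surely by (\ref{B}); the product therefore tends to $\Gamma(1-\alpha)$ in probability. (Note that $\Phi(n) > 0$ for all $n \geq 1$, since $\sum_j P_j = 1$ almost surely implies $G(x) \geq 1$ for $x$ sufficiently small, so the ratios are well defined.)

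To pass to real $t$, I would first observe that $\Phi$ is non-decreasing. Indeed, the substitution $u = tx$ in (\ref{A}) yields $\Phi(t) = \int_0^\infty e^{-u} G(u/t) \, du$, and since $G$ is non-increasing, the integrand is non-decreasing in $t$. For real $t \geq 1$ set $n = \lfloor t \rfloor$, so that $\Phi(n) \leq \Phi(t) \leq \Phi(n+1)$. Since $\ell$ is slowly varying and $n/t, (n+1)/t \to 1$, for any $\eta > 0$ there exists $T$ such that for all $t \geq T$,
$$(1-\eta) \cdot \frac{\Phi(n)}{n^\alpha \ell(n)} \leq \frac{\Phi(t)}{t^\alpha \ell(t)} \leq (1+\eta) \cdot \frac{\Phi(n+1)}{(n+1)^\alpha \ell(n+1)}.$$
Both extreme expressions converge in probability to $(1 \mp \eta) \Gamma(1-\alpha)$ by the integer case, and a standard sandwich argument (applied to convergence in probability by choosing $\eta$ small) then gives $\Phi(t)/(t^\alpha \ell(t)) \to \Gamma(1-\alpha)$ in probability.

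There is no genuine analytic obstacle; the proof is a careful combination of the available Poissonization identities with the hypothesis. The most delicate part is the interpolation from $n \to \infty$ to continuous $t \to \infty$, which rests on two ingredients that are essentially handed to us: monotonicity of $\Phi$ (visible from the substituted form of (\ref{A})) and the uniform bounds on $\ell$ supplied by Lemma \ref{slowvar}. Everything else is bookkeeping.
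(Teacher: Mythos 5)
Your proof is correct and follows essentially the same route as the paper's: identify the integral as $\Phi(t)$ via (\ref{A}), obtain convergence along integers by combining (\ref{s1b}) with (\ref{B}), and interpolate to real $t$ using monotonicity of $\Phi$ together with slow variation of $\ell$ (Lemma \ref{slowvar}). You merely make explicit a few steps the paper leaves implicit, such as the monotonicity of $\Phi$ and the sandwich argument for continuous $t$.
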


\begin{proof}
We use Poissonization.  Combining (\ref{s1b}) and (\ref{B}), we get
$$\lim_{n \rightarrow \infty} \frac{\Phi(n)}{n^{\alpha} \ell(n)} = \Gamma(1 - \alpha) \hspace{.1in}\textup{in probability}.$$  Since $t \mapsto \Phi(t)$ is nondecreasing and $\ell$ is slowly varying, it follows from Lemma \ref{slowvar} that $$\lim_{t \rightarrow \infty} \frac{\Phi(t)}{t^{\alpha} \ell(t)} = \Gamma(1 - \alpha) \hspace{.1in}\textup{in probability}.$$  The result now follows from (\ref{A}).
\end{proof}

\begin{Lemma}\label{zeroint}
We have $$\lim_{t \rightarrow \infty} \frac{t^{1 - \alpha}}{\ell(t)} \int_0^{\infty} e^{-tx} (G(x) - x^{-\alpha} \ell(1/x)) \: dx = 0 \hspace{.1in}\textup{in probability}.$$
\end{Lemma}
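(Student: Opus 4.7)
My plan is to split the integrand as $G(x) - x^{-\alpha}\ell(1/x)$ and treat the two pieces separately. Lemma \ref{Glem} already takes care of the random piece: it says
$$\frac{t^{1-\alpha}}{\ell(t)} \int_0^\infty e^{-tx} G(x) \, dx \to \Gamma(1-\alpha) \hspace{.1in} \textup{in probability}$$
as $t \to \infty$. So it suffices to prove the purely deterministic statement
$$\lim_{t \to \infty} \frac{t^{1-\alpha}}{\ell(t)} \int_0^\infty e^{-tx} x^{-\alpha} \ell(1/x) \, dx = \Gamma(1-\alpha),$$
and then subtract.

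For this deterministic limit, I would use the change of variables $u = tx$, which rewrites the left-hand side as
$$\int_0^\infty e^{-u} u^{-\alpha} \frac{\ell(t/u)}{\ell(t)} \, du.$$
By slow variation, the ratio $\ell(t/u)/\ell(t)$ tends to $1$ pointwise in $u$ as $t\to\infty$, and the pointwise limit integrates to $\int_0^\infty e^{-u} u^{-\alpha}\,du = \Gamma(1-\alpha)$. What remains is to justify passing the limit under the integral.

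The main obstacle is producing an integrable dominating function for $e^{-u}u^{-\alpha}\ell(t/u)/\ell(t)$, uniformly for all $t$ large. I would split the range of $u$ into three pieces using the constant $x_0 = x_0(\delta)$ from Lemma \ref{slowvar} (Potter's theorem), with $\delta$ chosen small, say $\delta < \min(\alpha, 1-\alpha)/2$. On $0 < u \leq 1$, Potter's bound applied with $x = t$ and $\lambda = 1/u \geq 1$ gives $\ell(t/u)/\ell(t) \leq (1+\delta) u^{-\delta}$, yielding the integrable majorant $(1+\delta)e^{-u} u^{-\alpha-\delta}$. On $1 < u \leq t/x_0$, Potter applied at $x = t/u \geq x_0$ with $\lambda = u \geq 1$ gives $\ell(t/u)/\ell(t) \leq (1+\delta) u^{\delta}$, yielding the integrable majorant $(1+\delta)e^{-u}u^{-\alpha+\delta}$. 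On the final range $u > t/x_0$, $\ell(t/u)$ is bounded by the standing assumption that $\ell$ is bounded on compacts, while $\ell(t) \geq C t^{-\delta}$ by the second part of Lemma \ref{slowvar}; so the contribution of this tail is $O(t^\delta e^{-t/x_0})$, which decays to $0$.

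Dominated convergence on the first two ranges yields the value $\Gamma(1-\alpha)$ in the limit, and the third range contributes nothing asymptotically, establishing the deterministic limit. Combining with Lemma \ref{Glem} by subtraction gives the desired convergence in probability to $0$.
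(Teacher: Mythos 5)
Your proof is correct and follows essentially the same route as the paper's: reduce to the deterministic limit via Lemma \ref{Glem}, substitute $u = tx$, and apply dominated convergence using Potter's bound. The only difference is cosmetic — the paper covers the range $0 < u \leq t/x_0$ in one stroke with the dominating function $e^{-u}u^{-\alpha}\max\{2u^{-\delta}, 2u^{\delta}\}$, whereas you separate $u \leq 1$ from $1 < u \leq t/x_0$; the tail estimate for $u > t/x_0$ is identical.
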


\begin{proof}
In view of Lemma \ref{Glem}, it suffices to show that
\begin{equation}\label{gammaint}
\lim_{t \rightarrow \infty} \frac{t^{1 - \alpha}}{\ell(t)} \int_0^{\infty} e^{-tx} x^{-\alpha} \ell(1/x) \: dx = \Gamma(1 - \alpha).
\end{equation}
Choose $\delta$ such that $\delta + \alpha < 1$, and choose $x_0$ so that (\ref{x0eq}) holds for $x \geq x_0$ and $\lambda \geq 1$.  Substituting $y = tx$, we get
\begin{align}\label{zi1}
\frac{t^{1 - \alpha}}{\ell(t)} \int_0^{\infty} e^{-tx} &x^{-\alpha} \ell(1/x) \: dx = \frac{1}{\ell(t)} \int_0^{\infty} e^{-y} y^{-\alpha} \ell(t/y) \: dy \nonumber \\
&= \int_0^{\infty} e^{-y} y^{-\alpha} \bigg( \frac{\ell(t/y)}{\ell(t)} \bigg) {\bf 1}_{\{y \leq t/x_0\}} \: dy + \frac{1}{\ell(t)} \int_{t/x_0}^{\infty} e^{-y} y^{-\alpha} \ell(t/y) \: dy.
\end{align}
By (\ref{x0eq}), we have $\ell(t/y)/\ell(t) \leq \max\{2 y^{-\delta}, 2 y^{\delta}\}$ whenever $t \geq x_0$ and $0 < y \leq t/x_0$.  Also, since $\ell$ is a slowly varying function, $\lim_{t \rightarrow \infty} \ell(t/y)/\ell(t) = 1$ for all $y > 0$.  Therefore, by the Dominated Convergence Theorem,
\begin{equation}\label{zi2}
\lim_{t \rightarrow \infty} \int_0^{\infty} e^{-y} y^{-\alpha} \bigg( \frac{\ell(t/y)}{\ell(t)} \bigg) {\bf 1}_{\{y \leq t/x_0\}} \: dy = \int_0^{\infty} e^{-y} y^{-\alpha} \: dy = \Gamma(1-\alpha).
\end{equation}
Recall from Lemma \ref{slowvar} that there is a constant $C$ such that $\ell(t) \geq C t^{-\delta}$ for all $t \geq x_0$.  By assumption, there is a constant $B$ such that $\ell(x) \leq B$ for $0 < x \leq x_0$.  Therefore,
\begin{equation}\label{zi3}
\limsup_{t \rightarrow \infty} \frac{1}{\ell(t)} \int_{t/x_0}^{\infty} e^{-y} y^{-\alpha} \ell(t/y) \: dy \leq \limsup_{t \rightarrow \infty} \frac{B t^{\delta}}{C} \bigg( \frac{t}{x_0} \bigg)^{-\alpha} e^{-t/x_0} = 0.
\end{equation}
Equation (\ref{gammaint}) follows from (\ref{zi1}), (\ref{zi2}), and (\ref{zi3}).
\end{proof}

\begin{Lemma}\label{bigGlem}
There exists a positive number $C_0$ such that if $C > C_0$, then $$\lim_{x \rightarrow 0} P(G(x) > C \ell(1/x) x^{-\alpha}) = 0.$$
\end{Lemma}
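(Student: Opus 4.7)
The plan is to exploit the monotonicity of $G$ together with Lemma \ref{Glem}, which is ready-made for this. Since $G$ is nonincreasing, for any $x > 0$ and $t > 0$,
\begin{equation*}
\int_0^\infty e^{-tx'} G(x') \, dx' \geq G(x) \int_0^x e^{-tx'} \, dx' = G(x) \cdot \frac{1 - e^{-tx}}{t}.
\end{equation*}
Making the natural choice $t = 1/x$ gives
\begin{equation*}
\int_0^\infty e^{-tx'} G(x') \, dx' \geq (1 - e^{-1}) \, x \, G(x),
\end{equation*}
so after multiplying by $t^{1-\alpha}/\ell(t)$ and using $x \cdot t^{1-\alpha} = x^\alpha$ and $\ell(t) = \ell(1/x)$,
\begin{equation*}
\frac{G(x) \, x^\alpha}{\ell(1/x)} \leq \frac{1}{1-e^{-1}} \cdot \frac{t^{1-\alpha}}{\ell(t)} \int_0^\infty e^{-tx'} G(x') \, dx'.
\end{equation*}

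By Lemma \ref{Glem}, the right-hand side converges in probability to $\Gamma(1-\alpha)/(1 - e^{-1})$ as $t \to \infty$, i.e., as $x \to 0$. Hence, setting $C_0 = \Gamma(1-\alpha)/(1-e^{-1})$, for any $C > C_0$,
\begin{equation*}
P\bigl(G(x) > C \ell(1/x) x^{-\alpha}\bigr) \leq P\!\left( \frac{1}{1-e^{-1}} \cdot \frac{t^{1-\alpha}}{\ell(t)} \int_0^\infty e^{-tx'} G(x') \, dx' > C \right) \longrightarrow 0.
\end{equation*}

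There is no real obstacle: the only idea required is the monotonicity lower bound for the Laplace integral, which converts a pointwise statement about $G$ into the form already controlled by Lemma \ref{Glem}. The specific constant produced, $C_0 = \Gamma(1-\alpha)/(1-e^{-1})$, will be far from sharp but is all that the statement demands.
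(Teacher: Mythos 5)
Your proof is correct and follows essentially the same route as the paper: use the monotonicity of $G$ to lower-bound the Laplace integral $\int_0^\infty e^{-ty}G(y)\,dy$ by $(1-e^{-1})\,x\,G(x)$ at $t=1/x$, then invoke Lemma \ref{Glem}. You even arrive at the same constant $C_0 = \Gamma(1-\alpha)/(1-e^{-1})$.
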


\begin{proof}
Suppose $G(x) \geq C \ell(1/x) x^{-\alpha}$.  Since $x \mapsto G(x)$ is nonincreasing,
$$t^{1-\alpha} \int_0^{\infty} e^{-ty} G(y) \: dy \geq t^{1-\alpha} \int_0^x e^{-ty} C \ell(1/x) x^{-\alpha} \: dy = C (tx)^{-\alpha} \ell(1/x) (1 - e^{-tx}).$$
Therefore, if $t = 1/x$, then $$\frac{t^{1-\alpha}}{\ell(t)} \int_0^{\infty} e^{-ty} G(y) \: dy \geq C(1 - e^{-1}).$$  By Lemma \ref{Glem}, the result follows with $C_0 = \Gamma(1 - \alpha)/(1 - e^{-1})$.
\end{proof}

\begin{Lemma}\label{Gbarlem}
There exists a positive number $C$ such that if ${\bar G}(x) = G(x) {\bf 1}_{\{G(x) > C \ell(1/x) x^{-\alpha}\}}$, then $$\lim_{ t \rightarrow \infty} \frac{t^{1 - \alpha}}{\ell(t)} \int_0^{\infty} e^{-tx} {\bar G}(x) \: dx = 0 \hspace{.1in}\textup{in probability}.$$
\end{Lemma}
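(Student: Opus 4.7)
The plan is to exploit Lemma \ref{bigGlem} together with the tightness of the scaled Laplace transform $L(t) := \frac{t^{1-\alpha}}{\ell(t)} \int_0^\infty e^{-ty} G(y) \, dy$ (which follows from Lemma \ref{Glem}) to make the ``bad'' event $\{G(x) > C \ell(1/x) x^{-\alpha}\}$ uniformly rare in $x$, and then to decompose $\bar{G}$ into a piece bounded deterministically by a multiple of $\ell(1/x) x^{-\alpha}$ and a ``true excess'' piece that we relate to Lemma \ref{zeroint}.

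The proof of Lemma \ref{bigGlem} yields the deterministic inclusion $\{G(x) > C \ell(1/x) x^{-\alpha}\} \subseteq \{L(1/x) \geq C(1 - e^{-1})\}$. Since Lemma \ref{Glem} makes $\{L(t) : t > 0\}$ tight, for any $\eta > 0$ I can select $C$ so large that $P(G(x) > C \ell(1/x) x^{-\alpha}) \leq \eta$ for all $x > 0$; fix such a $C$, with $\eta$ to be chosen small. Write
\[
\bar{G}(x) = (G(x) - C\ell(1/x) x^{-\alpha})_+ + C \ell(1/x) x^{-\alpha} \, \mathbf{1}_{\{G(x) > C\ell(1/x) x^{-\alpha}\}}.
\]
The second (threshold) piece I would handle by Fubini: its scaled expected Laplace integral equals $\frac{C t^{1-\alpha}}{\ell(t)} \int_0^\infty e^{-tx} \ell(1/x) x^{-\alpha} p(x) \, dx$, where $p(x) := P(G(x) > C\ell(1/x) x^{-\alpha}) \to 0$ as $x \to 0$ by Lemma \ref{bigGlem}. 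Splitting this integral into $(0, x_0] \cup (x_0, \infty)$ (using $p(x) < \epsilon$ on the first piece and the exponential decay of $e^{-tx}$ on the second) and invoking the integral identity (\ref{gammaint}) for the scaling, the expected value is at most $C\epsilon \Gamma(1-\alpha) + o(1)$ as $t \to \infty$; since $\epsilon$ is arbitrary, Markov's inequality converts this into the desired convergence in probability to $0$.

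For the excess piece the key observation is that on $\{G > C\ell(1/x) x^{-\alpha}\}$ one has $\ell(1/x) x^{-\alpha} < (G - \ell(1/x) x^{-\alpha})/(C-1)$, whence $G < \frac{C}{C-1}(G - \ell(1/x) x^{-\alpha})$. This gives the pointwise bound $(G(x) - C\ell(1/x) x^{-\alpha})_+ \leq \bar{G}(x) \leq \frac{C}{C-1}(G(x) - \ell(1/x) x^{-\alpha})_+$, reducing matters to controlling the scaled Laplace integral of the positive part $(G - \ell(1/x) x^{-\alpha})_+$.

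I expect the main obstacle to be precisely this step: promoting Lemma \ref{zeroint}'s \emph{signed} convergence $\frac{t^{1-\alpha}}{\ell(t)} \int_0^\infty e^{-tx}(G(x) - \ell(1/x) x^{-\alpha}) \, dx \to 0$ into control of the positive part. My plan here is to combine Cauchy--Schwarz with the uniform event bound $P(G > C\ell(1/x) x^{-\alpha}) \leq \eta$ from the first step (since $(G - C\ell(1/x) x^{-\alpha})_+$ is supported on this event) together with the deterministic constraint $G(x) \leq 1/x$ coming from $\sum_j P_j \leq 1$, so that the resulting upper bound on the scaled expected integral of $A_1$ vanishes as $\eta \to 0$, i.e.\ as $C \to \infty$. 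Combining the two bounds then establishes $\frac{t^{1-\alpha}}{\ell(t)} \int_0^\infty e^{-tx} \bar{G}(x) \, dx \to 0$ in probability for all sufficiently large $C$.
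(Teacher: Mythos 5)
Your decomposition of $\bar G$ into an excess part $(G - C\ell(1/x)x^{-\alpha})_+$ and a capped part is reasonable, and the treatment of the capped part via Tonelli, $p(x)\to 0$, and Markov can be made to work. But the Cauchy--Schwarz plan for the excess part has a genuine gap. You want to bound $E\big[(G(x)-C\ell(1/x)x^{-\alpha})_+\big] \le E[G(x)\mathbf 1_{E_x}] \le \sqrt{E[G(x)^2]}\sqrt{P(E_x)} \le x^{-1}\sqrt{P(E_x)}$ and then integrate against $e^{-tx}$. The difficulty is that $\int_0^{\epsilon} e^{-tx}x^{-1}\,dx$ diverges at $0$, and the only control available on $\sqrt{P(E_x)}$ near $0$ is that it tends to $0$ with no rate (Lemma \ref{bigGlem} gives only convergence, not a quantitative bound), so the resulting integral need not even be finite. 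The uniform bound $P(E_x)\le\eta$ is no help either, since $\sqrt\eta\int_0^{\epsilon}e^{-tx}x^{-1}\,dx=\infty$ for any $\eta>0$. The estimate $G(x)\le 1/x$ is simply too crude near $x=0$, and under the hypotheses there is no second-moment or rate-of-decay information to compensate; the same issue would reappear if one tried $E[G(x)\mathbf 1_{E_x}]\le x^{-1}P(E_x)$ directly.

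The paper's proof avoids all expectation estimates. It fixes a dyadic grid $2^{-M}t^{-1},\dots,2^Mt^{-1}$ and works on the event $A_t$ that $G$ is controlled at those finitely many points (each probability handled by Lemma \ref{bigGlem}). Because $G$ is non-increasing and $\ell$ is slowly varying, control at the grid points propagates to the whole interval $[2^{-M}t^{-1},2^Mt^{-1}]$, so on $A_t$ the function $\bar G$ vanishes identically on that middle range. The tail $[2^Mt^{-1},\infty)$ is then small on $A_t$ by monotonicity of $G$, and the head $[0,2^{-M}t^{-1}]$ is dispatched by the elementary bound $e^{-tx}\le e\cdot e^{-2^Mtx}$ together with Lemma \ref{Glem} applied at scale $2^Mt$. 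The key mechanism your argument is missing is precisely this: monotonicity of $G$ lets finitely many pointwise-in-probability estimates yield a pathwise (on $A_t$) vanishing of $\bar G$ over an entire interval, something no first- or second-moment bound on $(G-C\ell(1/x)x^{-\alpha})_+$ can deliver under the stated hypotheses.
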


\begin{proof}
Let $\epsilon > 0$.  Choose $C_1 > C_0$, where $C_0$ is the constant from Lemma \ref{bigGlem}, and let $C = 2^{1 + \alpha} C_1$.  Choose an integer $M$ large enough that $C_1 2^{-M \alpha} e^{-2^M} < \epsilon/2$ and $2^{-M (1 - \alpha)} \Gamma(1 - \alpha) e < \epsilon/4$.  For $t > 0$, define the event $$A_t = \{G(2^k t^{-1})\leq C_1 \ell(2^{-k} t) (2^k t^{-1})^{-\alpha} \mbox{ for }k = -M, -M + 1, \dots, M-1, M\}.$$  By Lemma \ref{bigGlem}, there exists $T_1 < \infty$ such that if $t > T_1$, then $P(A_t) > 1 - \epsilon/2$.  Because ${\bar G}(x) \leq G(x) \leq G(2^M t^{-1})$ for all $x \geq 2^M t^{-1}$, on the event $A_t$ we have
\begin{align}\label{pre1}
\frac{t^{1-\alpha}}{\ell(t)} \int_{2^M t^{-1}}^{\infty} e^{-tx} {\bar G}(x) \: dx &\leq \frac{t^{1-\alpha}}{\ell(t)} \cdot C_1 \ell(2^{-M} t)(2^M t^{-1})^{-\alpha} \int_{2^M t^{-1}}^{\infty} e^{-tx} \: dx \nonumber \\
&= C_1 2^{-M \alpha} e^{-2^M} \cdot \frac{\ell(2^{-M} t)}{\ell(t)} < \frac{\ell(2^{-M} t)}{\ell(t)} \cdot \frac{\epsilon}{2}.
\end{align}
Because $\ell$ is slowly varying, $\ell(2^{-M}t)/\ell(t) \rightarrow 1$ as $t \rightarrow \infty$.  Therefore, there exists a $T_2$ such that for $t > T_2$, on $A_t$ we have
\begin{equation}\label{piece1}
\frac{t^{1-\alpha}}{\ell(t)} \int_{2^M t^{-1}}^{\infty} e^{-tx} {\bar G}(x) \: dx < \frac{\epsilon}{2}.
\end{equation}

Also, on $A_t$, if $2^k t^{-1} \leq x \leq 2^{k+1} t^{-1}$ for some integer $k$ satisfying $-M \leq k \leq M-1$, then $$G(x) \leq G(2^k t^{-1}) \leq C_1 \ell(2^{-k} t)(2^k t^{-1})^{-\alpha} \leq C_1 \ell(2^{-k} t)(x/2)^{-\alpha} = 2^{\alpha} C_1 \ell(2^{-k} t) x^{-\alpha}.$$  By Lemma \ref{slowvar}, there exists $T_3 < \infty$ such that if $t > T_3$ and $2^k t^{-1} \leq x \leq 2^{k+1} t^{-1}$ for some integer $k$ satisfying $-M \leq k \leq M-1$, then $\ell(2^{-k} t)/\ell(1/x) \leq 2$.  Therefore, if $A_t$ occurs and $t > T_3$ then $$G(x) \leq 2^{1 + \alpha} C_1 \ell(1/x) x^{-\alpha} = C \ell(1/x) x^{-\alpha}.$$  In this case, ${\bar G}(x) = 0$ for $2^{-M}t^{-1} \leq x \leq 2^M t^{-1}$ and thus
\begin{equation}\label{piece2}
\frac{t^{1-\alpha}}{\ell(t)} \int_{2^{-M} t^{-1}}^{2^M t^{-1}} e^{-tx} {\bar G}(x) \: dx = 0.
\end{equation}

If $0 \leq x \leq 2^{-M} t^{-1}$, then $e^{-tx} \leq 1 \leq e \cdot e^{-2^M tx}$.  Therefore,
\begin{align}\label{pre3}
\frac{t^{1-\alpha}}{\ell(t)} \int_0^{2^{-M} t^{-1}} e^{-tx} {\bar G}(x) \: dx &\leq \frac{e t^{1-\alpha}}{\ell(t)} \int_0^{2^{-M} t^{-1}} e^{-2^M tx} {\bar G}(x) \: dx \nonumber \\
&\leq \bigg( e 2^{-M(1-\alpha)} \cdot \frac{\ell(2^M t)}{\ell(t)} \bigg) \frac{(2^M t)^{1-\alpha}}{\ell(2^M t)} \int_0^{\infty} e^{-2^M tx} G(x) \: dx \nonumber \\
&\leq \bigg( \frac{\epsilon}{4 \Gamma(1-\alpha)} \cdot \frac{\ell(2^M t)}{\ell(t)} \bigg) \frac{(2^M t)^{1-\alpha}}{\ell(2^M t)} \int_0^{\infty} e^{-2^M tx} G(x) \: dx.
\end{align}
By Lemma \ref{Glem} with $2^M t$ in place of $t$, the portion of the right-hand side of (\ref{pre3}) after the parentheses converges in probability to $\Gamma(1-\alpha)$ as $t \rightarrow \infty$.  Also, because $\ell$ is slowly varying, we have $\ell(2^M t)/\ell(t) \rightarrow 1$ as $t \rightarrow \infty$.  Therefore, there exists $T_4$ such that if $t > T_4$, then
\begin{equation}\label{piece3}
P \bigg( \frac{t^{1-\alpha}}{\ell(t)} \int_0^{2^{-M} t^{-1}} e^{-tx} {\bar G}(x) \: dx > \frac{\epsilon}{2} \bigg) < \frac{\epsilon}{2}.
\end{equation}
It follows from (\ref{piece1}), (\ref{piece2}), and (\ref{piece3}) that if $t > \max\{T_1, T_2, T_3, T_4\}$, then $$P \bigg( \frac{t^{1-\alpha}}{\ell(t)} \int_0^{\infty} e^{-tx} {\bar G}(x) \: dx > \epsilon \bigg) < \epsilon,$$ which implies the lemma.
\end{proof}

\begin{Lemma}\label{GProb}
We have $$\lim_{x \rightarrow 0} \frac{x^{\alpha} G(x)}{\ell(1/x)} = 1 \hspace{.1in}\textup{in probability}.$$
\end{Lemma}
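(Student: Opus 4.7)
The plan is to recast the problem as a statement about vague convergence of random measures on $(0,\infty)$ and then decode pointwise values of $G$ using its monotonicity. Define the random Radon measures $\nu_t(du) = t^{-\alpha}\ell(t)^{-1} G(u/t)\, du$, and let $\mu(du) = u^{-\alpha}\, du$, which has Laplace transform $\hat\mu(c) = c^{\alpha-1}\Gamma(1-\alpha)$. A change of variables gives
\[
\hat\nu_t(c) = \frac{t^{1-\alpha}}{\ell(t)}\int_0^\infty e^{-ctx} G(x)\, dx,
\]
and applying Lemma \ref{Glem} with $ct$ in place of $t$, together with the slow-variation fact $\ell(ct)/\ell(t) \to 1$, yields $\hat\nu_t(c) \to \hat\mu(c)$ in probability as $t \to \infty$, for each fixed $c > 0$.

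Given any sequence $t_n \to \infty$, a standard diagonal subsequence extraction produces $(t_{n_k})$ along which $\hat\nu_{t_{n_k}}(c) \to \hat\mu(c)$ almost surely simultaneously for every positive rational $c$. On this full-probability event, the continuity theorem for Laplace transforms of locally finite positive measures on $(0,\infty)$ implies $\nu_{t_{n_k}} \to \mu$ vaguely. Since $\mu$ is absolutely continuous, $\nu_{t_{n_k}}([a,b]) \to \int_a^b u^{-\alpha}\, du$ almost surely for every $0 < a < b$. The monotonicity of $G$ gives the sandwich
\[
(b-a)\,\frac{G(b/t)}{t^{\alpha}\ell(t)} \;\leq\; \nu_t([a,b]) \;\leq\; (b-a)\,\frac{G(a/t)}{t^{\alpha}\ell(t)}.
\]
Applying the left inequality with $[a,b]=[1-\epsilon,1]$ and the right inequality with $[a,b]=[1,1+\epsilon]$ yields
\[
\frac{\nu_{t_{n_k}}([1, 1+\epsilon])}{\epsilon} \;\leq\; \frac{G(1/t_{n_k})}{t_{n_k}^{\alpha}\,\ell(t_{n_k})} \;\leq\; \frac{\nu_{t_{n_k}}([1-\epsilon, 1])}{\epsilon}.
\]
Letting $k \to \infty$ and then $\epsilon \to 0$, both outer quantities tend to $1$ (since $\epsilon^{-1}\int_{1-\epsilon}^1 u^{-\alpha}\, du \to 1$ and $\epsilon^{-1}\int_1^{1+\epsilon} u^{-\alpha}\, du \to 1$ by the mean value theorem), so $G(1/t_{n_k})/(t_{n_k}^{\alpha}\ell(t_{n_k})) \to 1$ almost surely along the subsequence. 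Since every sequence admits such a subsequence, $x^\alpha G(x)/\ell(1/x) \to 1$ in probability as $x \to 0$.

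The main delicate point is the appeal to the continuity theorem: the measures $\nu_t$ have $\nu_t((0,\infty)) = t^{1-\alpha}/\ell(t) \to \infty$ and are therefore only locally finite on $(0,\infty)$, so one must invoke the version of the theorem for locally finite positive measures, in which pointwise convergence of Laplace transforms on a countable dense subset of $(0,\infty)$ implies vague convergence on $(0,\infty)$. Everything else --- the slow-variation reductions and the monotonicity sandwich with $\epsilon \to 0$ --- is routine.
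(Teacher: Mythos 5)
Your proof is correct, and it takes a genuinely different route from the paper's. Where the paper proceeds by contradiction — truncating $G$ via Lemmas \ref{bigGlem} and \ref{Gbarlem}, forming $Y(x) = \min\{G(x), C\ell(1/x)x^{-\alpha}\} - x^{-\alpha}\ell(1/x)$, and then approximating an explicit tent function by exponential polynomials via Stone--Weierstrass to test $\int f(x/s_n)Y(x)\,dx$ against the known decay of $\int e^{-tx}Y(x)\,dx$ — you instead package everything into the vague convergence of the random measures $\nu_t$ and invoke the continuity theorem for Laplace transforms, then unwind $G$ with the monotonicity sandwich. Your route bypasses Lemmas \ref{zeroint}, \ref{bigGlem}, and \ref{Gbarlem} entirely, needing only Lemma \ref{Glem} and a subsequence/diagonalization argument, and is conceptually a ``soft'' Tauberian argument versus the paper's hands-on one. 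The delicate point you flag — that the $\nu_t$ are only locally finite — is genuine but routine: tilting by $e^{-u}$ reduces to finite measures, so pointwise convergence of $\hat\nu_t(c)$ for $c$ in a dense set (the $\hat\nu_t$ are monotone in $c$ and the limit $c^{\alpha-1}\Gamma(1-\alpha)$ is continuous, so dense suffices) gives weak convergence of the tilted measures and hence vague convergence of $\nu_t$. What the paper's approach buys is self-containment and elementarity (it never leaves the realm of pointwise estimates); what yours buys is brevity and transparency, at the cost of leaning on the Laplace continuity theorem as an imported black box.
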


\begin{proof}
Choose $C_0$ as in Lemma \ref{bigGlem}, and choose $C > \max\{C_0, 1\}$ large enough that the conclusion of Lemma \ref{Gbarlem} holds.  For $x \geq 0$, let $$Y(x) = \min\{G(x), C \ell(1/x)x^{-\alpha}\} - x^{-\alpha} \ell(1/x).$$  In view of Lemma \ref{bigGlem}, it suffices to show that
\begin{equation}\label{Ynts}
\lim_{x \rightarrow 0} \frac{x^{\alpha} Y(x)}{\ell(1/x)} = 0 \hspace{.1in}\textup{in probability}.
\end{equation}
Note that $|Y(x)| \leq C x^{-\alpha} \ell(1/x)$ for all $x \geq 0$.  By Lemmas \ref{zeroint} and \ref{Gbarlem},
\begin{equation}\label{Ycp}
\lim_{t \rightarrow \infty} \frac{t^{1-\alpha}}{\ell(t)} \int_0^{\infty} e^{-tx} Y(x) \: dx = 0 \hspace{.1in}\textup{in probability}.
\end{equation}

We proceed by contradiction.  Suppose (\ref{Ynts}) fails to hold.  Then there exists $0 < \epsilon < 1/2$ and a sequence of positive numbers $(s_n)_{n=1}^{\infty}$ converging to zero such that one of the following holds:
\begin{enumerate}
\item We have $P(Y(s_n) > \epsilon s_n^{-\alpha} \ell(1/s_n)) > \epsilon$ for all $n$.

\item We have $P(Y(s_n) < - \epsilon s_n^{-\alpha} \ell(1/s_n)) > \epsilon$ for all $n$.
\end{enumerate}

Assume for now that we are in the first case, so $P(Y(s_n) > \epsilon s_n^{-\alpha} \ell(1/s_n)) > \epsilon$ for all $n$.  If $Y(s_n) > \epsilon s_n^{-\alpha} \ell(1/s_n)$, then $G(s_n) > (1 + \epsilon)s_n^{-\alpha} \ell(1/s_n)$.  In this case, if $x < s_n$, we have $$G(x) \geq G(s_n) > (1 + \epsilon) s_n^{-\alpha} \ell(1/s_n) = (1 + \epsilon) \bigg( \frac{x}{s_n} \bigg)^{\alpha} \frac{\ell(1/s_n)}{\ell(1/x)} \cdot x^{-\alpha} \ell(1/x).$$  Choose $\delta > 0$ small enough that $(1 + \delta)^{-1}(1 + \epsilon)^{1 - \alpha - \delta} > 1$.  If $s_n/(1 + \epsilon) < x < s_n$ and if $n$ is large enough that $1/s_n > x_0(\delta)$, then by Lemma \ref{slowvar}, $$\frac{1}{(1 + \delta)(1 + \epsilon)^{\delta}} \leq \frac{\ell(1/s_n)}{\ell(1/x)} \leq (1 + \delta)(1 + \epsilon)^{\delta}.$$  Therefore, $$G(x) > \frac{(1 + \epsilon)^{1 - \alpha - \delta}}{(1 + \delta)} x^{-\alpha} \ell(1/x).$$  It follows that for $s_n/(1 + \epsilon) < x < s_n$, we have
\begin{align}\label{Yeta}
Y(x) &> \bigg( \frac{(1 + \epsilon)^{1 - \alpha - \delta}}{(1 + \delta)} - 1 \bigg) x^{-\alpha} \ell(1/x) \nonumber \\
&\geq  \bigg( \frac{(1 + \epsilon)^{1 - \alpha - \delta}}{(1 + \delta)} - 1 \bigg) \frac{1}{(1 + \delta)(1 + \epsilon)^{\delta}} s_n^{-\alpha} \ell(1/s_n) \nonumber \\
&= \eta s_n^{-\alpha} \ell(1/s_n),
\end{align}
where $\eta > 0$.

Let $f: [0, \infty) \rightarrow \R$ be the function such that $f(x) = 0$ if either $x \leq 1/(1+\epsilon)$ or $x \geq 1$, $f((2 + \epsilon)/(2 + 2 \epsilon)) = 1$, and $f$ is linear on the two intervals $[1/(1 + \epsilon), (2+\epsilon)/(2 + 2 \epsilon)]$ and $[(2+\epsilon)/(2 + 2 \epsilon), 1]$.  Note that 
\begin{equation}\label{triangle}
\int_{1/(1 + \epsilon)}^1 f(x) \: dx = \frac{1}{2} \bigg(1 - \frac{1}{1 + \epsilon} \bigg) = \frac{\epsilon}{2(1 + \epsilon)}.
\end{equation}
Let ${\cal A}$ be the algebra of functions of the form $\varphi(x) = a_1 e^{-t_1 x} + \dots + a_m e^{-t_m x}$ for $x \geq 0$, where $m$ is a nonnegative integer, $a_1, \dots, a_m \in \R$, and $t_1, \dots, t_m \geq 1$.  By the Stone-Weierstrass Theorem (see, for example, Theorem D.23 on p. 346 of \cite{cohn}), the set ${\cal A}$ is uniformly dense in the set $C_0([0, \infty))$ of continuous functions from $[0, \infty)$ to $\R$ that vanish at infinity.  Therefore, if we choose $\zeta = \epsilon \eta/(16 \Gamma(1 - \alpha) C)$, then there is a function $g \in {\cal A}$ such that $|g(x) - e^x f(x)| \leq \zeta$ for all $x \geq 0$.  Letting $h(x) = e^{-x} g(x)$ for $x \geq 0$, we have $|h(x) - f(x)| \leq \zeta e^{-x}$ for all $x \geq 0$.  Write $g(x) = a_1 e^{-t_1 x} + \dots + a_m e^{-t_m x}$.  

Choose $\theta = \min\{\epsilon/2m, 2^{1 - \alpha} \epsilon \eta/8(|a_1| + \dots + |a_m|)\} > 0$.  By (\ref{Ycp}) we can choose $n$ large enough that $2/s_n \geq T$, where for $t \geq T$ we have $$P \bigg( \bigg| \frac{t^{1-\alpha}}{\ell(t)} \int_0^{\infty} e^{-tx} Y(x) \: dx \bigg| > \theta \bigg) < \theta.$$  It follows that with probability at least $1 - m \theta$,
\begin{align}\label{hYbound}
\bigg| \int_0^{\infty} h(x/s_n) Y(x) \: dx \bigg| &= \bigg| \sum_{i=1}^m a_i \int_0^{\infty} e^{-(t_i + 1)x/s_n} Y(x) \: dx \bigg| \nonumber \\
&\leq \sum_{i=1}^m |a_i| \bigg( \frac{t_i + 1}{s_n} \bigg)^{\alpha - 1} \ell \bigg( \frac{t_i + 1}{s_n} \bigg) \theta \nonumber \\
&\leq \frac{\theta}{2^{1-\alpha}} s_n^{1-\alpha} \ell(1/s_n) \sum_{i=1}^m |a_i| \frac{\ell((t_i+1)/s_n)}{\ell(1/s_n)}.
\end{align}
Also, using (\ref{gammaint}) with $1/s_n$ in place of $t$, we have that for sufficiently large $n$,
\begin{align}\label{fhdiff}
\bigg| \int_0^{\infty}  \big(f(x/s_n) - h(x/s_n) \big) Y(x) \: dx \bigg| &\leq \int_0^{\infty} \zeta e^{-x/s_n} C x^{-\alpha} \ell(1/x) \: dx \nonumber \\
&\leq 2 C \Gamma(1 - \alpha) \zeta s_n^{1-\alpha} \ell(1/s_n).
\end{align}
Since $\ell$ is slowly varying, it follows from (\ref{hYbound}) and (\ref{fhdiff}) that with probability at least $1 - m \theta$,
\begin{equation}\label{limsupeq}
\limsup_{n \rightarrow \infty} \frac{1}{s_n^{1 - \alpha} \ell(1/s_n)} \bigg| \int_0^{\infty} f(x/s_n) Y(x) \: dx \bigg| \leq 2 \Gamma(1 - \alpha) C \zeta + \frac{\theta (|a_1| + \dots + |a_m|)}{2^{1-\alpha}} \leq \frac{\epsilon \eta}{4}.
\end{equation}
However, (\ref{Yeta}) and (\ref{triangle}) imply that for sufficiently large $n$, with probability at least $\epsilon$,
\begin{align}
\int_0^{\infty} f(x/s_n) Y(x) \: dx &= \int_{s_n/(1 + \epsilon)}^{s_n} f(x/s_n) Y(x) \: dx \nonumber \\
&> \eta s_n^{-\alpha} \ell(1/s_n) \int_{s_n/(1 + \epsilon)}^{s_n} f(x/s_n) \: dx \nonumber \\
&= \frac{\epsilon \eta}{2 (1 + \epsilon)} s_n^{1-\alpha} \ell(1/s_n), \nonumber
\end{align}
which contradicts (\ref{limsupeq}) because $m \theta < \epsilon$.

It remains now to consider the second case.  Assume that $P(Y(s_n) < - \epsilon s_n^{-\alpha} \ell(1/s_n)) > \epsilon$ for all $n$.  If $Y(s_n) < - \epsilon s_n^{-\alpha} \ell(1/s_n)$, then $G(s_n) < (1 - \epsilon) s_n^{-\alpha} \ell(1/s_n)$.  In this case, if $x > s_n$, then $$G(x) \leq G(s_n) < (1 - \epsilon) s_n^{-\alpha} \ell(1/s_n) = (1 - \epsilon) \bigg( \frac{x}{s_n} \bigg)^{\alpha} \frac{\ell(1/s_n)}{\ell(1/x)} \cdot x^{-\alpha} \ell(1/x).$$  Choose $\delta > 0$ small enough that $(1 + \delta)(1 - \epsilon)^{1 - \alpha - \delta} < 1$.  If $s_n < x < s_n/(1-\epsilon)$ and if $n$ is large enough that $(1 - \epsilon)/s_n > x_0(\delta)$, then by Lemma \ref{slowvar}, $$\frac{(1 - \epsilon)^{\delta}}{1 + \delta} \leq \frac{\ell(1/s_n)}{\ell(1/x)} \leq \frac{1 + \delta}{(1 - \epsilon)^{\delta}}.$$  Therefore, $$G(x) < (1 + \delta) (1 - \epsilon)^{1 - \alpha - \delta} x^{-\alpha} \ell(1/x).$$  It follows that for $s_n < x < s_n/(1-\epsilon)$, we have
\begin{align}\label{Yupper}
Y(x) &< \big( (1 + \delta)(1 - \epsilon)^{1 - \alpha - \delta} - 1 \big) x^{-\alpha} \ell(1/x) \nonumber \\
&\leq \big( (1 + \delta)(1 - \epsilon)^{1 - \alpha - \delta} - 1 \big) (1 + \delta)^{-1}(1 - \epsilon)^{\alpha + \delta} s_n^{-\alpha} \ell(1/s_n) \nonumber \\
&= -\eta s_n^{-\alpha} \ell(1/s_n),
\end{align}
where $\eta > 0$.

This time, let $f: [0, \infty) \rightarrow \R$ be the function such that $f(x) = 0$ if $x \leq 1$ or $x \geq 1/(1-\epsilon)$, $f((2-\epsilon)/(2-2\epsilon)) = 1$, and $f$ is linear on $[1, (2-\epsilon)/(2-2\epsilon)]$ and $[(2-\epsilon)/(2-2\epsilon), 1/(1-\epsilon)]$.  We have
\begin{equation}\label{fint}
\int_1^{1/(1-\epsilon)} f(x) \: dx = \frac{1}{2} \bigg(\frac{1}{1 - \epsilon} - 1 \bigg) = \frac{\epsilon}{2(1-\epsilon)}.
\end{equation}
Define $g$ and $\theta$ as in the previous case.  Then (\ref{hYbound}), (\ref{fhdiff}), and (\ref{limsupeq}) hold as before.  However, (\ref{Yupper}) and (\ref{fint}) imply that for sufficiently large $n$, with probability at least $\epsilon$,
\begin{align}
\int_0^{\infty} f(x/s_n) Y(x) \: dx &= \int_{s_n}^{s_n/(1-\epsilon)} f(x/s_n) Y(x) \: dx \nonumber \\
&< - \eta s_n^{-\alpha} \ell(1/s_n) \int_{s_n}^{s_n/(1-\epsilon)} f(x/s_n) \: dx \nonumber \\
&= -\frac{\epsilon \eta}{2(1-\epsilon)} s_n^{1-\alpha} \ell(s_n)
\end{align}
which again contradicts (\ref{limsupeq}) because $m \theta < \epsilon$.
\end{proof}

\begin{proof}[Proof of Theorem \ref{mainth}]
Fix $r \in \N$.  It follows from (\ref{s1b}) and (\ref{C}) that given $\epsilon > 0$, for sufficiently large $n$ we have
\begin{equation}\label{D}
P \bigg( \bigg| \sum_{s=r}^{\infty} K_{n,s} - \sum_{s=r}^{\infty} \Psi_s(n) \bigg| < \frac{\epsilon}{2} n^{\alpha} \ell(n) \bigg) > 1 - \frac{\epsilon}{2}
\end{equation}
and
\begin{equation}\label{E}
P \bigg( \bigg| \sum_{s=r+1}^{\infty} K_{n,s} - \sum_{s=r+1}^{\infty} \Psi_s(n) \bigg| < \frac{\epsilon}{2} n^{\alpha} \ell(n) \bigg) > 1 - \frac{\epsilon}{2}
\end{equation}
Subtracting (\ref{E}) from (\ref{D}) gives that
$$P \bigg( \bigg| \frac{K_{n,r}}{n^{\alpha} \ell(n)} - \frac{\Phi_r(n)}{n^{\alpha} \ell(n)} \bigg| < \epsilon \bigg) > 1 - \epsilon$$ for sufficiently large $n$,  Therefore, it suffices to show that
\begin{equation}\label{phirprob}
\lim_{t \rightarrow \infty} \frac{\Phi_r(t)}{t^{\alpha} \ell(t)} = \frac{\alpha \Gamma(r - \alpha)}{r!} \hspace{.1in}\textup{in probability}.
\end{equation}

Let $\theta > 0$ be arbitrary.  Because $\sum_{r=1}^{\infty} \alpha \Gamma(r - \alpha)/r! = \Gamma(1-\alpha)$, we can choose $N$ large enough that
\begin{equation}\label{Nchoose}
\sum_{r=1}^N \frac{\alpha \Gamma(r - \alpha)}{r!} > \Gamma(1 - \alpha) - \frac{\theta}{2}.
\end{equation}
Let $\eta = \min\{\theta/(N+1), \theta/(4 \Gamma(1 - \alpha))\}$.  Note that we can choose a sufficiently large integer $L$, then a sufficiently small positive number $\delta$ (much smaller than $1/L$), then a sufficiently large integer $M$ (much larger than $1/\delta$), then a sufficiently small positive number $\epsilon$ (much smaller than $1/M$) such that
\begin{equation}\label{4C}
\bigg( \frac{L}{L+2} \bigg)^r \bigg((1 - \epsilon)^2 - \frac{4 \epsilon M}{\alpha} \bigg) \int_{\delta (L+1)}^{\delta (M+1)} e^{-y} y^{r-\alpha-1} \: dy > (1 - \eta) \Gamma(r - \alpha)
\end{equation}
for $1 \leq r \leq N$.  By Lemma \ref{GProb}, we can choose $T_1 > 0$ sufficiently large that if $t \geq T_1$, then $$P\big( (1 - \epsilon) x^{-\alpha} \ell(1/x) \leq G(x) \leq (1 + \epsilon) x^{-\alpha} \ell(1/x) \mbox{ for }x = L\delta/t, (L+1)\delta/t, \dots, M\delta/t \big) > 1 - \eta.$$  By Lemma \ref{slowvar}, we can choose $T_2 > 0$ sufficiently large that if $t \geq T_2$ and $L \delta/t \leq x \leq M \delta/t$, then $$1 - \epsilon \leq \frac{\ell(1/x)}{\ell(t)} \leq 1 + \epsilon.$$

If $t \geq \max\{T_1, T_2\}$, then with probability at least $1 - \eta$, we have, using (\ref{F}),
\begin{align}\label{lower}
\frac{\Phi_r(t)}{t^{\alpha} \ell(t)} &= \frac{t^{r - \alpha}}{r! \ell(t)} \sum_{j=1}^{\infty} P_j^r e^{-t P_j} \nonumber \\
&\geq \frac{t^{r - \alpha}}{r! \ell(t)} \sum_{k = L}^{M-1} \bigg( \frac{k \delta}{t} \bigg)^r e^{-(k+1) \delta} \big( G(k \delta/t) - G((k+1)\delta/t) \big) \nonumber \\
&\geq \frac{t^{-\alpha} \delta^r}{r! \ell(t)} \sum_{k = L}^{M-1} k^r e^{-(k+1) \delta} \bigg( (1 - \epsilon) \bigg( \frac{k \delta}{t} \bigg)^{-\alpha} \ell \bigg( \frac{t}{k \delta} \bigg) - (1 + \epsilon) \bigg( \frac{(k+1) \delta}{t} \bigg)^{-\alpha} \ell \bigg( \frac{t}{(k+1)\delta} \bigg)\bigg) \nonumber \\
&\geq \frac{\delta^{r-\alpha}}{r!} \sum_{k=L}^{M-1} k^r e^{-(k+1) \delta} \bigg( \frac{(1 - \epsilon)^2}{k^{\alpha}} - \frac{(1 + \epsilon)^2}{(k + 1)^{\alpha}} \bigg).
\end{align}
For $k \leq M-1$,
\begin{align}
\frac{(1 - \epsilon)^2}{k^{\alpha}} - \frac{(1 + \epsilon)^2}{(k +1)^{\alpha}} &= (1 - \epsilon)^2 \bigg( \frac{1}{k^{\alpha}} - \frac{1}{(k+1)^{\alpha}} \bigg) + \frac{1}{(k+1)^{\alpha}} \big( (1 - \epsilon)^2 - (1 + \epsilon)^2 \big) \nonumber \\
&\geq \frac{\alpha (1 - \epsilon)^2}{(k+1)^{\alpha + 1}} - \frac{4 \epsilon}{(k+1)^{\alpha}} \nonumber \\
&\geq \frac{\alpha}{(k+1)^{\alpha + 1}} \bigg( (1 - \epsilon)^2 - \frac{4 \epsilon M}{\alpha} \bigg).\nonumber
\end{align}
Therefore, if $1 \leq r \leq N$ and $t \geq \max\{T_1, T_2\}$, then with probability at least $1 - \eta$,
$$\frac{\Phi_r(t)}{t^{\alpha} \ell(t)} \geq \bigg( (1 - \epsilon)^2 - \frac{4 \epsilon M}{\alpha} \bigg) \frac{\alpha \delta^{r-\alpha}}{r!} \sum_{k=L}^{M-1} \frac{k^r}{(k+1)^{\alpha+1}} e^{-(k+1) \delta}.$$
If $r \geq 2$ and $k \geq L$ then $$\frac{k^r}{(k+1)^{\alpha + 1}} e^{-(k+1) \delta} \geq \bigg( \frac{L}{L+2} \bigg)^r (k+2)^{r - \alpha - 1} e^{-(k+1) \delta} \geq \bigg( \frac{L}{L+2} \bigg)^r \int_{k+1}^{k+2} x^{r-\alpha-1} e^{-\delta x} \: dx,$$ and if $r = 1$ and $k  \geq L$ then
$$\frac{k^r}{(k+1)^{\alpha + 1}} e^{-(k+1) \delta} \geq \bigg( \frac{L}{L+1} \bigg)^r (k+1)^{r-\alpha-1} e^{-(k+1) \delta} \geq \bigg( \frac{L}{L+2} \bigg)^r \int_{k+1}^{k+2} x^{r-\alpha-1} e^{-\delta x} \: dx.$$
Thus, if $1 \leq r \leq N$ and $t \geq \max\{T_1, T_2\}$, then with probability at least $1 - \eta$, we have
\begin{align}\label{G}
\frac{\Phi_r(t)}{t^{\alpha} \ell(t)} &\geq \bigg( \frac{L}{L+2} \bigg)^r  \bigg( (1 - \epsilon)^2 - \frac{4 \epsilon M}{\alpha} \bigg) \frac{\alpha \delta^{r-\alpha}}{r!} \int_{L+1}^{M+1} x^{r-\alpha-1} e^{-\delta x} \: dx \nonumber \\
&= \bigg( \frac{L}{L+2} \bigg)^r  \bigg( (1 - \epsilon)^2 - \frac{4 \epsilon M}{\alpha} \bigg) \frac{\alpha}{r!} \int_{\delta(L+1)}^{\delta(M+1)} e^{-y} y^{r-\alpha-1} \: dy. \nonumber \\
&> \frac{(1 - \eta) \alpha \Gamma(r - \alpha)}{r!},
\end{align}
where the last inequality uses (\ref{4C}).

Since $t \mapsto \Phi(t)$ is nondecreasing and $\ell$ is slowly varying, (\ref{s1b}) and (\ref{B}) imply that $\Phi(t)/(t^{\alpha} \ell(t))$ converges in probability to $\Gamma(1 - \alpha)$ as $t \rightarrow \infty$.  Therefore, there exists $T_3$ such that if $t \geq T_3$, then
\begin{equation}\label{H}
P \bigg( \frac{\Phi(t)}{t^{\alpha} \ell(t)} \leq (1 + \eta) \Gamma(1 - \alpha) \bigg) > 1 - \eta.
\end{equation}
Therefore, combining (\ref{G}) and (\ref{H}), if $1 \leq r \leq N$ and $t \geq \max\{T_1, T_2, T_3\}$, then with probability at least $1 - (N+1)\eta \geq 1 - \theta$,
\begin{align}\label{upper}
\frac{\Phi_r(t)}{t^{\alpha} \ell(t)} &\leq \frac{1}{t^{\alpha} \ell(t)} \bigg( \Phi(t) - \sum_{\substack{s = 1 \\ s \neq r}}^{N} \Phi_s(t) \bigg) \nonumber \\
&\leq (1 + \eta) \Gamma(1-\alpha) - (1 - \eta) \sum_{\substack{s = 1 \\ s \neq r}}^{N} \frac{\alpha \Gamma(s - \alpha)}{s!} \nonumber \\
&= \Gamma(1 - \alpha) - \sum_{\substack{s = 1 \\ s \neq r}}^{N} \frac{\alpha \Gamma(s - \alpha)}{s!} + \eta \bigg( \Gamma(1 - \alpha) + \sum_{\substack{s = 1 \\ s \neq r}}^{N} \frac{\alpha \Gamma(s - \alpha)}{s!} \bigg) \nonumber \\
&\leq \frac{\alpha \Gamma(r - \alpha)}{r!} + \frac{\theta}{2} + 2 \eta \Gamma(1 - \alpha) \nonumber \\
&\leq \frac{\alpha \Gamma(r - \alpha)}{r!} + \theta.
\end{align}
using (\ref{Nchoose}).  The result (\ref{phirprob}) for $r = 1, \dots, N$ now follows from (\ref{G}) and (\ref{upper}).  Since $N$ can be taken to be arbitrarily large, the result holds for all positive integers $r$.
\end{proof}

\section{Description of Example \ref{newex}}\label{newexsec}

We specify a random sequence $P_1 \geq P_2 \geq \dots$ such that $\sum_{j=1}^{\infty} P_j = 1$ a.s. in the following way:
\begin{enumerate}
\item Begin with any deterministic sequence $q_1 \geq q_2 \geq \dots$ such that $\sum_{j=1}^{\infty} q_j < 1/2$ and such that if $g(x) = \max\{j: q_j \geq x\}$, then $\lim_{x \rightarrow 0} x^{\alpha} g(x) = 1$.

\item Given a positive integer $n_1$, we can define, for $k \geq 2$, the integer $n_k = 2^{2^{2^{n_{k-1}}}}$.  Choose $n_1$ large enough that $\sum_{k=1}^{\infty} n_k^{\alpha - 1} < 1/2$.

\item Define a sequence of independent random variables $(R_k)_{k=1}^{\infty}$ such that $R_k$ has the uniform distribution on $\{1, 2, \dots, n_k\}$ for all $k$.  Then for all $k \in \N$, add the number $1/(n_k 2^{2^{R_k}})$ to the sequence $\lfloor 2^{2^{R_k}} n_k^{\alpha} \rfloor$ times.

\item Add the number $$1 - \sum_{j=1}^{\infty} q_j - \sum_{k=1}^{\infty} \frac{1}{n_k 2^{2^{R_k}}} \lfloor 2^{2^{R_k}} n_k^{\alpha} \rfloor$$ to the sequence to make the numbers sum to one.

\item Order the numbers and relabel them $P_1 \geq P_2 \geq \dots$.
\end{enumerate}
Using the method described in the introduction, define an exchangeable random partition $\Pi$ whose asymptotic block frequencies are almost surely given by this sequence $(P_j)_{j=1}^{\infty}$.  The next two lemmas show that $\Pi$ satisfies the conditions of Example \ref{newex}.

\begin{Lemma}\label{Gproblem}
For the sequence $(P_j)_{j=1}^{\infty}$ defined above, if we define $G(x) = \max\{j: P_j \geq x\}$, then $$\lim_{x \rightarrow 0} x^{\alpha} G(x) = 1 \hspace{.1in}\textup{in probability}.$$
\end{Lemma}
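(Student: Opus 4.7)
My plan is to decompose $G(x)$ according to which block each $P_j\geq x$ arises from. For each sufficiently small $x$, define $k^*=k^*(x)$ to be the unique integer with $n_{k^*}\leq 1/x<n_{k^*+1}$, so that $k^*(x)\to\infty$ as $x\to 0$. I will write $G(x)=g(x)+S_{<}(x)+S_{k^*}(x)+\mathrm{corr}(x)$, where $S_{<}(x)$ collects contributions from blocks indexed by $j<k^*$, the term $S_{k^*}(x)$ is the contribution from block $k^*$, and $\mathrm{corr}(x)\in\{0,1\}$ records whether the final rounding value exceeds $x$. Since $x^\alpha\cdot\mathrm{corr}(x)\leq x^\alpha\to 0$ and $x^\alpha g(x)\to 1$ by hypothesis on $(q_j)$, it remains to control the two middle terms. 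Blocks with $j>k^*$ contribute $0$ automatically because $n_j>1/x$ forces the block value $1/(n_j 2^{2^{R_j}})<x$.

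For $j<k^*$, the triple-exponential growth $n_{k^*}\geq n_{j+1}=2^{2^{2^{n_j}}}$ gives $1/(n_j x)\geq n_{k^*}/n_j\gg 2^{2^{n_j}}$, which exceeds $2^{2^{R_j}}$ for every $R_j\leq n_j$. Hence block $j$ is deterministically active with contribution $\lfloor 2^{2^{R_j}}n_j^\alpha\rfloor\leq 2^{2^{n_j}}n_j^\alpha$. The sum $S_{<}(x)$ is dominated by its $j=k^*-1$ term, and dividing by $x^{-\alpha}\geq n_{k^*}^\alpha=2^{\alpha\cdot 2^{2^{n_{k^*-1}}}}$ produces a ratio that decays to $0$ super-exponentially in $k^*$. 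Thus $x^\alpha S_{<}(x)\to 0$ deterministically.

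The decisive estimate is on $S_{k^*}(x)$. One has $x^\alpha S_{k^*}(x)>\epsilon$ exactly when $R_{k^*}$ lies in the half-open interval
$$I=\bigl(\log_2\log_2(\epsilon(n_{k^*}x)^{-\alpha}),\ \log_2\log_2((n_{k^*}x)^{-1})\bigr].$$
Setting $v=\log_2((n_{k^*}x)^{-1})>0$, the length of $I$ equals $\log_2\bigl(v/(\alpha v+\log_2\epsilon)\bigr)$, which is bounded by $\log_2(2/\alpha)$ once $v\geq 2|\log_2\epsilon|/\alpha$; for smaller $v$ both endpoints of $I$ are bounded by a constant depending only on $\epsilon$ and $\alpha$. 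Either way, $I$ contains at most $C(\alpha,\epsilon)$ integers, so the uniform distribution of $R_{k^*}$ on $\{1,\dots,n_{k^*}\}$ forces $P(R_{k^*}\in I)\leq C(\alpha,\epsilon)/n_{k^*}\to 0$.

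Combining the pieces yields $P(x^\alpha G(x)>1+\epsilon)\to 0$, and the matching lower bound $x^\alpha G(x)\geq x^\alpha g(x)\to 1$ is automatic since $G\geq g$. The main obstacle is precisely the probabilistic estimate on $S_{k^*}$: when block $k^*$ is active, its raw contribution to $x^\alpha G(x)$ can be as large as $(n_{k^*}x)^{\alpha-1}$, which is astronomical as $x\to 0$. The proof must therefore exploit the very particular \emph{double-exponential-inside-a-logarithm} structure of the construction to force the window of "bad" $R_{k^*}$-values to remain of bounded size rather than growing with $n_{k^*}$; this is what makes the single random block compatible with convergence in probability, even though it will obstruct almost-sure convergence and, as the subsequent lemma will show, convergence in probability for $n^{-\alpha}K_n$.
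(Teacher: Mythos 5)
Your proof is correct and follows essentially the same route as the paper: the same decomposition by block index around a critical $k^*$ with $n_{k^*}\le 1/x<n_{k^*+1}$, the same deterministic bound on the contribution of blocks $j<k^*$ using the tower growth of $n_k$, and the same key probabilistic estimate that the window of bad $R_{k^*}$-values has length bounded by a constant depending only on $\alpha$ and $\epsilon$. The paper packages that last step via the identity $\log_2\log_2 z - \log_2\log_2 z^\alpha = \log_2(1/\alpha)$ applied with $z = 1/(xn_k)$, whereas you bound $\log_2\bigl(v/(\alpha v+\log_2\epsilon)\bigr)$ directly; these are the same estimate.
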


\begin{proof}
Let $G'(x)$ denote the number of terms that were added to the sequence in step 3 of the above construction that are greater than or equal to $x$.  Because $\lim_{x \rightarrow 0} x^{\alpha} g(x) = 1$ by step 1 of the construction, it suffices to show that
\begin{equation}\label{Gprimeprob}
\lim_{x \rightarrow 0} x^{\alpha} G'(x) = 0 \hspace{.1in}\textup{in probability}.
\end{equation}
Let $\epsilon > 0$.  Suppose $1/n_{k+1} \leq x \leq 1/n_k$.  Because $R_j \leq n_j$, there can be at most $2^{2^{n_j}} n_j^{\alpha}$ terms in the sequence that equal $1/(n_j 2^{2^{R_j}})$ for $j = 1, \dots, k-1$.  Therefore,
\begin{equation}\label{Gprime}
G'(x) \leq \sum_{j=1}^{k-1} 2^{2^{n_j}} n_j^{\alpha} + 2^{2^{R_k}} n_k^{\alpha} {\bf 1}_{\{1/(n_k 2^{2^{R_k}}) \geq x\}}.
\end{equation}
By the choice of $n_k$, we have $$\sum_{j=1}^{k-1} 2^{2^{n_j}} n_j^{\alpha} \leq \frac{\epsilon}{2} n_k^{\alpha} \leq \frac{\epsilon}{2} x^{-\alpha}$$ for sufficiently large $k$.  The second term on the right-hand side of (\ref{Gprime}) will be at most $(\epsilon/2) x^{-\alpha}$ unless we have both $1/(n_k 2^{2^{R_k}}) \geq x$ and $2^{2^{R_k}} n_k^{\alpha} \geq (\epsilon/2) x^{-\alpha}$ or, equivalently, unless
$$\log_2 \log_2 \bigg( \frac{\epsilon}{2 x^{\alpha} n_k^{\alpha}} \bigg) \leq R_k \leq \log_2 \log_2 \bigg( \frac{1}{x n_k} \bigg).$$  Because $R_k$ has a uniform distribution on $\{1, \dots, n_k\}$, the probability that $R_k$ falls in this interval is at most 
\begin{equation}\label{Rkprob}
\frac{1}{n_k} \bigg(1 + \log_2 \log_2 \bigg( \frac{1}{x n_k} \bigg) - \log_2 \log_2 \bigg( \frac{\epsilon}{2 x^{\alpha} n_k^{\alpha}} \bigg) \bigg).
\end{equation}
Note that for all real numbers $z > 1$, we have $$\log_2 \log_2 z - \log_2 \log_2 z^{\alpha} = \log_2 \bigg( \frac{\log_2 z}{\log_2 z^{\alpha}} \bigg) = \log_2 \bigg( \frac{1}{\alpha} \bigg).$$  By applying this result when $z = 1/(x n_k)$, we see that the probability in (\ref{Rkprob}) tends to zero as $k \rightarrow \infty$.  It follows that $\lim_{x \rightarrow \infty} P(G'(x) > \epsilon x^{-\alpha}) = 0$ for all $\epsilon > 0$, and (\ref{Gprimeprob}) follows.
\end{proof}

\begin{Lemma}
For the random partition $\Pi$ defined above, if $\Pi_n$ denotes the restriction of $\Pi$ to $\{1, \dots, n\}$ and $K_n$ denotes the number of blocks of $\Pi_n$, then there exists a constant $C > 0$ such that $$\lim_{k \rightarrow \infty} P\big(n_k^{-\alpha} K_{n_k} \geq \Gamma(1 - \alpha) + C \big) = 1.$$ 
\end{Lemma}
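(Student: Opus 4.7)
The strategy is to Poissonize. Working with $\Phi(t) = \sum_{j:P_j>0}(1 - e^{-tP_j})$ as in (\ref{A}) and (\ref{F}), and invoking (\ref{B}) which gives $K_{n_k}/\Phi(n_k) \to 1$ a.s., it suffices to exhibit a constant $C > 0$ such that $\Phi(n_k)/n_k^{\alpha} \geq \Gamma(1-\alpha) + C + o(1)$ almost surely. I would decompose
$$
\Phi(n_k) \;=\; \Phi_q(n_k) \;+\; \sum_{j=1}^{\infty} \Phi_{(j)}(n_k) \;+\; O(1),
$$
where $\Phi_q(t) := \sum_j (1 - e^{-t q_j})$ is the contribution of the deterministic $q$-atoms of step 1, $\Phi_{(j)}(t) := \lfloor 2^{2^{R_j}} n_j^{\alpha}\rfloor \bigl(1 - e^{-t/(n_j 2^{2^{R_j}})}\bigr)$ is the contribution of the batch of atoms added at step $j$ of the construction, and the $O(1)$ term comes from the single compensating atom added in step 4.

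I would then estimate each piece separately. For the $q$-part, writing $g(x) = \max\{j : q_j \geq x\}$, which satisfies $x^{\alpha} g(x) \to 1$ by step 1, the integral representation in (\ref{A}) combined with a Tauberian computation identical to (\ref{gammaint}) with $\ell \equiv 1$ gives $\Phi_q(n_k)/n_k^{\alpha} \to \Gamma(1-\alpha)$. For level $k$ itself, evaluating at $t = n_k$ yields $\Phi_{(k)}(n_k) = \lfloor 2^{2^{R_k}} n_k^{\alpha}\rfloor \bigl(1 - e^{-2^{-2^{R_k}}}\bigr)$; setting $u_k = 2^{-2^{R_k}}$, this equals $n_k^{\alpha}(1-e^{-u_k})/u_k + O(1)$. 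The key observation is that $u \mapsto (1-e^{-u})/u$ is strictly decreasing on $(0,\infty)$ (differentiate and note $e^{-u}(1+u) - 1 < 0$ for $u > 0$), and since $R_k \geq 1$ we have $u_k \leq 1/4$, whence
$$
\Phi_{(k)}(n_k) \;\geq\; 4\bigl(1 - e^{-1/4}\bigr)\, n_k^{\alpha} \;+\; O(1),
$$
a deterministic lower bound that holds regardless of the realized value of $R_k$; this is the source of the extra mass $C$. For $j < k$, the trivial bound $\Phi_{(j)}(n_k) \leq \lfloor 2^{2^{R_j}} n_j^{\alpha}\rfloor \leq 2^{2^{n_j}} n_j^{\alpha}$, combined with $n_k^{\alpha} = 2^{\alpha \cdot 2^{2^{n_{k-1}}}}$ from the tower-of-twos definition, makes $\sum_{j<k}\Phi_{(j)}(n_k) = o(n_k^{\alpha})$. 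For $j > k$, the inequality $1 - e^{-x} \leq x$ gives $\Phi_{(j)}(n_k) \leq n_k n_j^{\alpha-1}$, and the rapid growth of $n_{k+1}$ relative to $n_k$ yields $\sum_{j>k}\Phi_{(j)}(n_k) \leq 2 n_k n_{k+1}^{\alpha-1} = o(n_k^{\alpha})$.

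Combining these estimates gives $\liminf_{k\to\infty} \Phi(n_k)/n_k^{\alpha} \geq \Gamma(1-\alpha) + 4(1-e^{-1/4})$ almost surely, and applying (\ref{B}) then establishes the lemma with any $C < 4(1-e^{-1/4})$. The main subtlety will be verifying the rapid-growth comparisons in the bounds for $j \neq k$; however, the tower-of-twos definition of $n_k$ was clearly engineered precisely so that $n_k^{\alpha}$ overwhelms the partial contribution from levels $j \leq k-1$ while $n_{k+1}$ is so much larger than $n_k$ that the tail $\sum_{j > k}$ is also negligible, so this should amount to careful bookkeeping rather than a genuine obstacle.
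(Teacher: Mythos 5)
Your proof is correct and follows essentially the same route as the paper: Poissonize, split $\Phi$ into the contribution from the deterministic $q$-atoms (which the Tauberian estimate sends to $\Gamma(1-\alpha) n_k^\alpha$) and the contribution from the randomly-sized marked batches, then lower-bound the level-$k$ batch by $4(1-e^{-1/4})\,n_k^{\alpha} + O(1)$ using that $u\mapsto(1-e^{-u})/u$ is decreasing and $R_k\geq 1$ forces $u_k\leq 1/4$. One remark: the bounds you flag as ``the main subtlety'' --- showing $\sum_{j<k}\Phi_{(j)}(n_k)$ and $\sum_{j>k}\Phi_{(j)}(n_k)$ are $o(n_k^\alpha)$ --- are not actually needed here, since every $\Phi_{(j)}(n_k)\geq 0$ and the lemma asks only for a lower bound; the paper simply lumps all marked contributions into a single $\Phi'(n_k)$ and drops everything but level $k$. (Those upper bounds are, in spirit, what makes the companion Lemma \ref{Gproblem} work.)
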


\begin{proof}
We use Poissonization.  Let $(N(t), t \geq 0)$ be a rate one Poisson process, and let $\Phi(t) = E[K_{N(t)}|(P_j)_{j=1}^{\infty}]$.  By (\ref{B}), it suffices to show that there is a $C > 0$ such that
\begin{equation} \label{phists}
\liminf_{k \rightarrow \infty} n_k^{-\alpha} \Phi(n_k) \geq \Gamma(1 - \alpha) + C \hspace{.1in}\textup{a.s.}
\end{equation}
For all $k \in \N$, designate $\lfloor 2^{2^{R_k}} n_k^{\alpha} \rfloor$ blocks of $\Pi$ with asymptotic frequency $1/(n_k 2^{2^{R_k}})$ as marked blocks, while the other blocks of $\Pi$ will be unmarked.  If there are more than $\lfloor 2^{2^{R_k}} n_k^{\alpha} \rfloor$ blocks with asymptotic frequency $1/(n_k 2^{2^{R_k}})$ because $q_j = 1/(n_k 2^{2^{R_k}})$ for some $j$, then choose at random the blocks to mark.  Note that the marked blocks correspond to the terms $P_k$ that were added in step 3 of the above construction.  The unmarked blocks all have asymptotic frequency $q_j$ for some $j$, except for the block added in step 4 of the construction.  Let $\Phi'(t)$ be the expected number of marked blocks of $\Pi_{N(t)}$ conditional on $(P_j)_{j=1}^{\infty}$, and let $\Phi''(t)$ be the expected number of unmarked blocks of $\Pi_{N(t)}$ conditional on $(P_j)_{j=1}^{\infty}$.  Note that $\Phi(t) = \Phi'(t) + \Phi''(t)$.  By Proposition \ref{karlin1} and (\ref{B}), we have
\begin{equation}\label{phi''}
\lim_{k \rightarrow \infty} n_k^{-\alpha} \Phi''(n_k) = \Gamma(1 - \alpha) \hspace{.1in}\textup{a.s.}
\end{equation}
The number of integers in the set $\{1, \dots, N(n_k)\}$ that are in a block of $\Pi$ with asymptotic frequency $1/(n_k 2^{2^r})$ has a Poisson distribution with mean $2^{-2^r}$.  Therefore, on the event $\{R_k = r\}$, we have $$\Phi'(n_k) \geq \lfloor 2^{2^r} n_k^{\alpha} \rfloor (1 - e^{-2^{-2^r}}).$$  Since $x^{-1}(1 - e^{-x})$ is bounded away from zero for all $x \leq 1/4$, it follows that there is a constant $C > 0$ such that $n_k^{-\alpha} \Phi'(n_k) \geq C$ a.s. for all $k$.  This fact, combined with (\ref{phi''}), implies (\ref{phists}).
\end{proof}

\section{Description of Example \ref{boszex}}\label{th2sec}

We begin by specifying a deterministic sequence of numbers $p_1 \geq p_2 \geq \dots$ such that $\sum_{j=1}^{\infty} p_j = 1$ as follows:
\begin{enumerate}
\item Begin with any sequence $q_1 \geq q_2 \geq \dots$ such that if $g(x) = \max\{j: q_j \geq x\}$, then
\begin{equation}\label{qlim}
\lim_{x \rightarrow 0} x (\log x)^2 g(x) = 1.
\end{equation}
It is not difficult to see that such sequences exist.  One arises, for example, in \cite{bago07}.

\item Choose any integer $j$ such that $$\sum_{k=j+1}^{\infty} q_k < 1 - \sum_{n=2}^{\infty} n^{-9/2}.$$  Then remove the terms $q_1, \dots q_j$ from the sequence.

\item For all $n \geq 2$, add the number $e^{-n^3}$ to the list $\lfloor n^{-9/2} e^{n^3} \rfloor$ times.

\item Add the number $$1 - \sum_{k=j+1}^{\infty} q_k - \sum_{n=2}^{\infty} e^{-n^3} \lfloor n^{-9/2} e^{n^3} \rfloor$$ to the sequence to make the numbers in the new sequence sum to one.

\item Order the numbers and relabel them $p_1 \geq p_2 \geq \dots$.
\end{enumerate}
Using the method described in the introduction, define an exchangeable random partition $\Pi$ whose asymptotic block frequencies are almost surely given by this sequence $(p_j)_{j=1}^{\infty}$.  The next two lemmas establish that $\Pi$ satisfies the conditions of Example \ref{boszex}.

\begin{Lemma}
For the random partition $\Pi$ defined above, if $\Pi_n$ denotes the restriction of $\Pi$ to $\{1, \dots, n\}$ and $K_n$ denotes the number of blocks of $\Pi_n$, then $$\lim_{n \rightarrow \infty} \frac{(\log n) K_n}{n} = 1 \hspace{.1in}\textup{a.s.}$$
\end{Lemma}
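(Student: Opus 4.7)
My plan is to Poissonize. Let $(N(t), t \geq 0)$ be a rate-one Poisson process and set
\[
\Phi(t) = E[K_{N(t)}] = \sum_{j \geq 1} (1 - e^{-t p_j});
\]
the conditioning in the definition is trivial because $(p_j)$ is deterministic. Equation (\ref{B}) gives $K_n/\Phi(n) \to 1$ a.s., so it suffices to prove $\Phi(n) \log n / n \to 1$. Grouping summands according to which construction step they came from, I decompose $\Phi(t) = \Phi^{(q)}(t) + \Phi^{(e)}(t) + \Phi^{(1)}(t)$, where $\Phi^{(q)}(t) = \sum_{k > j} (1 - e^{-t q_k})$ collects the original $q$-terms, $\Phi^{(e)}(t) = \sum_{m \geq 2} M_m (1 - e^{-t e^{-m^3}})$ with $M_m = \lfloor m^{-9/2} e^{m^3} \rfloor$ collects the bumps, and $\Phi^{(1)}(t) \leq 1$ is the single leftover term from step 4.

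For the $q$-piece I renormalize: set $S = \sum_{k > j} q_k$ and $\tilde p_k = q_{j+k}/S$, so that $\sum_k \tilde p_k = 1$. The hypothesis (\ref{qlim}) translates into $x \tilde g(x)(\log(1/x))^2 \to 1/S$ as $x \to 0$, which is exactly (\ref{k2a}) for the slowly varying function $\ell(t) = S^{-1}(\log t)^{-2}$, whose associated $\ell_1$ equals $S^{-1}/\log t$. Proposition \ref{karlin2} therefore gives $\tilde K_n \sim n/(S \log n)$ a.s., and applying (\ref{B}) to this auxiliary partition upgrades this to the deterministic asymptotic $\tilde \Phi(n) \sim n/(S \log n)$; monotonicity of $t \mapsto \tilde \Phi(t)$ extends the limit to real $t$. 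Reindexing the sum shows $\Phi^{(q)}(t) = \tilde \Phi(St)$, and so $\Phi^{(q)}(n) \sim n/\log n$.

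For the bump piece I estimate directly, splitting the sum at $m_0 := \lfloor (\log n)^{1/3} \rfloor$, the approximate location where $n e^{-m^3}$ crosses one. For $m \leq m_0$, I use $1 - e^{-x} \leq 1$: because $e^{m^3}$ grows super-exponentially in $m$, the sum $\sum_{m \leq m_0} M_m$ is dominated by its last term, which is $O(n (\log n)^{-3/2})$. For $m > m_0$, I use $1 - e^{-x} \leq x$ to bound the contribution by $n \sum_{m > m_0} m^{-9/2} = O(n (\log n)^{-7/6})$. Both bounds, along with $\Phi^{(1)}(n) \leq 1$, are $o(n/\log n)$. Combining everything gives $\Phi(n) \log n / n \to 1$, and (\ref{B}) then delivers the claimed a.s.\ convergence.

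The main delicacy is the $q$-piece: because $S < 1$, Proposition \ref{karlin2} does not apply to the tail sequence $(q_k)_{k > j}$ directly, and one must renormalize and then carefully track the factor $S$ through the Tauberian-type step. By contrast, the bump estimates are self-contained and elementary, and the leftover term is trivial.
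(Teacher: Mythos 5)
Your proposal is correct and follows essentially the same route as the paper: Poissonize, split $\Phi(t)$ into the contribution of the original $q$-terms (handled via Proposition \ref{karlin2}) and the contribution of the added blocks $e^{-m^3}$ (handled by a direct splitting estimate at the scale where $t e^{-m^3}\approx 1$), then invoke (\ref{B}). The one place you are more careful than the paper is the $q$-piece: the paper applies Proposition \ref{karlin2} to the $q$-sequence and then asserts that deleting $q_1,\dots,q_j$ (and adding the step-4 remainder) ``does not affect this conclusion,'' whereas your renormalization $\tilde p_k=q_{j+k}/S$ together with the scaling identity $\Phi^{(q)}(t)=\tilde\Phi(St)$ makes this step fully rigorous.
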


\begin{proof}
We again use Poissonization.  Let $(N(t), t \geq 0)$ be a rate one Poisson process, and let $\Phi(t) = E[K_{N(t)}]$.  By (\ref{B}), it suffices to show that
\begin{equation}\label{phi}
\lim_{t \rightarrow \infty} \frac{(\log t) \Phi(t)}{t} = 1.
\end{equation}

For all $n \geq 2$, designate $\lfloor n^{-9/2} e^{n^3} \rfloor$ blocks of $\Pi$ with asymptotic frequency $e^{-n^3}$ as marked blocks, while the others are unmarked blocks.  If there are more than $\lfloor n^{-9/2} e^{n^3} \rfloor$ blocks with asymptotic frequency $e^{-n^3}$ because $q_k = e^{-n^3}$ for some $k$, then choose at random the blocks to mark.  Note that the marked blocks correspond to the terms $p_k$ that were added in step 3 of the construction above.  The unmarked blocks all have asymptotic frequency $q_k$ for some $k > j$, except for the one unmarked block that is added in step 4 of the construction.  Let $\Phi'(t)$ be the expected number of marked blocks of $\Pi_{N(t)}$, and let $\Phi''(t)$ be the expected number of unmarked blocks of $\Pi_{N(t)}$.  Note that $\Phi(t) = \Phi'(t) + \Phi''(t)$.  In view of (\ref{qlim}), we can apply Proposition \ref{karlin2} with $\ell(t) = (\log t)^{-2}$ for $t > 1$ in combination with (\ref{B}) to get
\begin{equation}\label{phi2}
\lim_{t \rightarrow \infty} \frac{(\log t) \Phi''(t)}{t} = 1.
\end{equation}
That $q_1, \dots, q_j$ were deleted and one unmarked block was added does not affect this conclusion.

Now, choose $t$ such that $e^{(n-1)^3} < t \leq e^{n^3}$.  The number of marked blocks of $\Pi$ with asymptotic frequency at least $e^{-(n-1)^3}$ is $$\sum_{k=1}^{n-1} \lfloor k^{-9/2} e^{k^3} \rfloor \leq C_1 n^{-9/2} e^{(n-1)^3} \leq C_1 n^{-9/2} t,$$ where $C_1$ is a positive constant that does not depend on $n$.  This bound holds because the sum is dominated by the largest term.  If a block of $\Pi$ has asymptotic frequency $q$, then the probability that at least one of the first $N(t)$ integers is in the block is $1 - e^{-qt} \leq qt$.  Therefore, the expected number of marked blocks of $\Pi_{N(t)}$ with asymptotic frequency $e^{-n^3}$ or smaller is at most $$\sum_{k=n}^{\infty} (e^{-k^3} t) \cdot  k^{-9/2} e^{k^3} = t \sum_{k=n}^{\infty} k^{-9/2} \leq C_2 n^{-7/2} t,$$ where $C_2$ is another positive constant that does not depend on $n$.  Therefore, $\Phi'(t) \leq C_1 n^{-9/2} t + C_2 n^{-7/2} t.$
Since $\log t \leq n^3$, it follows that
\begin{equation}\label{phi1}
\lim_{t \rightarrow \infty} \frac{(\log t) \Phi'(t)}{t} = 0.
\end{equation}
Now (\ref{phi}) follows from (\ref{phi1}) and (\ref{phi2}).
\end{proof}

\begin{Lemma}
For the random partition $\Pi$ defined above, if $\Pi_n$ denotes the restriction of $\Pi$ to $\{1, \dots, n\}$ and $K_{n,r}$ denotes the number of blocks of $\Pi_n$ of size $r$, then for $r \geq 2$, the quantity $n^{-1} (\log n)^2 K_{n, r}$ does not converge to $1/[r(r-1)]$ in probability as $n \rightarrow \infty$.
\end{Lemma}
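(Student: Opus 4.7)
The plan is to exhibit a deterministic subsequence $n_m = \lfloor e^{m^3} \rfloor$ along which $(\log n_m)^2 K_{n_m, r}/n_m$ tends to infinity in probability; this is incompatible with convergence to the finite value $1/[r(r-1)]$. The heuristic is that at $n = n_m$ each of the $M_m = \lfloor m^{-9/2} e^{m^3} \rfloor$ marked atoms of mass $e^{-m^3}$ inserted in step 3 at level $m$ produces a block whose size in $\Pi_{n_m}$ is $\mathrm{Binomial}(n_m, e^{-m^3})$, which is approximately $\mathrm{Poisson}(1)$ because $n_m e^{-m^3} \to 1$. The expected number of such blocks of size exactly $r$ is therefore of order $M_m/(r!e) \sim n_m/\bigl[(r!e)(\log n_m)^{3/2}\bigr]$, a factor $(\log n_m)^{1/2}$ larger than the conjectured limit $n_m/[r(r-1)(\log n_m)^2]$.

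First, I would introduce $K^{(m)}_{n,r} := \sum_{i \in J_m} \mathbf{1}_{\{X_{i,n} = r\}}$, where $J_m$ indexes the $M_m$ marked atoms at level $m$ and $X_{i,n} = |\{k \leq n : \xi_k = i\}|$ is $\mathrm{Binomial}(n, e^{-m^3})$ in the Kingman sampling construction. Then $K_{n,r} \geq K^{(m)}_{n,r}$, and a direct computation gives
$$E\bigl[K^{(m)}_{n_m, r}\bigr] = M_m \binom{n_m}{r} e^{-rm^3}(1-e^{-m^3})^{n_m - r} \sim \frac{M_m}{r! e} \qquad (m \to \infty).$$

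The main technical step is a variance bound: I aim to show $\Var\bigl(K^{(m)}_{n_m, r}\bigr) = o\bigl(E[K^{(m)}_{n_m, r}]^2\bigr)$, so that Chebyshev yields $K^{(m)}_{n_m, r}/E[K^{(m)}_{n_m, r}] \to 1$ in probability. The diagonal contribution is at most $E[K^{(m)}_{n_m,r}]$, which is $o(E[K^{(m)}_{n_m,r}]^2)$ since this mean tends to infinity. For the off-diagonal covariances, the joint law of $(X_i, X_j)$ for $i \neq j$ in $J_m$ is multinomial with parameters $(n_m ; e^{-m^3}, e^{-m^3}, 1-2e^{-m^3})$, yielding
$$\frac{P(X_i = r,\, X_j = r)}{P(X_i = r)\, P(X_j = r)} = \frac{\binom{n_m}{r, r, n_m - 2r}}{\binom{n_m}{r}^2} \cdot \frac{(1 - 2e^{-m^3})^{n_m - 2r}}{(1-e^{-m^3})^{2(n_m - r)}}.$$
The first factor tends to $1$ for fixed $r$ as $n_m \to \infty$, and a Taylor expansion shows that the log of the second factor is $O(n_m e^{-2m^3}) = o(1)$, uniformly in $i \neq j \in J_m$. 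Hence the sum of off-diagonal covariances is $o\bigl(M_m^2 (e^{-1}/r!)^2\bigr) = o\bigl(E[K^{(m)}_{n_m,r}]^2\bigr)$.

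Combining these, with probability tending to one we will have $K^{(m)}_{n_m, r} \geq \tfrac{1}{2} M_m/(r!e)$, and therefore
$$\frac{(\log n_m)^2 K_{n_m, r}}{n_m} \geq \frac{m^6}{2 n_m} \cdot \frac{M_m}{r! e} \sim \frac{m^{3/2}}{2 r! e} \longrightarrow \infty,$$
which rules out convergence in probability to any finite number, in particular to $1/[r(r-1)]$. The only nontrivial point in the argument is the covariance estimate above; it ultimately reduces to the fact that $n_m (e^{-m^3})^2 \to 0$, which is ensured by the very lacunary choice of atom masses $e^{-m^3}$ in the construction.
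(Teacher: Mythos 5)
Your proof is correct and the core strategy is the same as the paper's: single out the $M_m = \lfloor m^{-9/2}e^{m^3}\rfloor$ marked atoms of mass $e^{-m^3}$, observe that at sample size $n_m\approx e^{m^3}$ each such atom contains $\approx\text{Poisson}(1)$ sampled points, so the marked atoms alone already contribute $\sim M_m/(r!\,e)$ blocks of size $r$, which exceeds $n_m(\log n_m)^{-2}$ by a diverging factor $m^{3/2}$. The difference is purely in the concentration step. The paper first Poissonizes the sample size, so that the occupancy counts of distinct atoms become \emph{exactly} independent Poissons; it then uses a binomial large-deviations bound and de-Poissonizes at the end via Markov's inequality applied to $|K_{N(m_n),r}-K_{\lfloor m_n\rfloor,r}|\leq |N(m_n)-\lfloor m_n\rfloor|$. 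You instead work directly at the fixed sample size $n_m$, accept that the occupancy counts $(X_i)_{i\in J_m}$ are a multinomial vector and hence negatively dependent, and control the off-diagonal covariance by hand, showing that the factor $\frac{((n-r)!)^2}{n!(n-2r)!}\cdot\frac{(1-2p)^{n-2r}}{(1-p)^{2(n-r)}}\to 1$ because $r$ is fixed and $n_m p^2 = O(e^{-m^3})\to 0$. Chebyshev then replaces the large-deviations bound. What Poissonization buys the paper is that the independence is free and the final de-Poissonization is a one-line Markov argument reusable across the whole paper; what your approach buys is self-containment (no Poissonization machinery, no de-Poissonization step) at the cost of the short multinomial covariance computation. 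Both are perfectly sound, and both in fact prove the stronger statement that $(\log n_m)^2 K_{n_m,r}/n_m \to\infty$ in probability along the subsequence $n_m=\lfloor e^{m^3}\rfloor$, not just that it misses the particular value $1/[r(r-1)]$.
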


\begin{proof}
We consider the sequence $(K_{\lfloor m_n \rfloor, r})_{n=1}^{\infty}$, where $m_n = e^{n^3}$ for all $n$.  Let $(N(t), t \geq 0)$ be a rate one Poisson process.  There are at least $\lfloor n^{-9/2} e^{n^3} \rfloor$ blocks of $\Pi$ with asymptotic frequency $e^{-n^3}$.  Order these blocks at random, and then let $A_{i,n}$ be the event that the $i$th of these blocks contains exactly $r$ of the integers $1, \dots, N(m_n)$.  Because the number of the integers $\{1, \dots, N(n_m)\}$ in one of these blocks has a Poisson distribution with mean $1$, we have $P(A_{i,n}) = e^{-1}/r!$ for all $i$ and $n$.  Also, for any $n$, the events $A_{i,n}$ for $1 \leq i \leq \lfloor n^{-9/2} e^{n^3} \rfloor$ are independent.  It follows that for all $n$, the random variable $K_{N(m_n), r}$ stochastically dominates a Binomial$(\lfloor n^{-9/2} e^{n^3} \rfloor, e^{-1}/r!)$ random variable.  It now follows from standard large deviations estimates that
\begin{equation}\label{Knlim}
\lim_{n \rightarrow \infty} P\bigg(K_{N(m_n), r} \geq \frac{2e^{-1}}{3r!} n^{-9/2} e^{n^3} \bigg) = 1.
\end{equation}
Because $N(m_n)$ has the Poisson distribution with mean $m_n$, we have $\mbox{Var}(N(m_n)) = m_n$ and therefore $E[|N(m_n) - m_n|] \leq m_n^{1/2}$.  Since $|K_{N(m_n), r} - K_{\lfloor m_n \rfloor, r}| \leq |N(m_n) - \lfloor m_n \rfloor|$, it follows that $E[|K_{N(m_n), r} - K_{\lfloor m_n \rfloor, r}|] \leq e^{n^3/2} + 1$.  Combining this result with Markov's inequality gives
\begin{equation}\label{Knlim2}
\lim_{n \rightarrow \infty} P\bigg(|K_{N(m_n), r} - K_{\lfloor m_n \rfloor, r}| > \frac{e^{-1}}{3r!} n^{-9/2} e^{n^3} \bigg) = 0.
\end{equation}
Combining (\ref{Knlim}) and (\ref{Knlim2}) gives $$\lim_{n \rightarrow \infty} P\bigg(K_{\lfloor m_n \rfloor, r} \geq \frac{e^{-1}}{3r!} n^{-9/2} e^{n^3} \bigg) = 1.$$  Since $m_n (\log m_n)^{-2} = n^{-6} e^{n^3}$, the result follows.
\end{proof}

\section{Proof of Theorem \ref{coalth}}\label{appsec}

We will assume that $(\Psi_n(t), t \geq 0)$ is obtained from Kingman's coalescent $(\Theta_n(t), t \geq 0)$ as in (\ref{timechange}).  For any partition $\pi$ of $\{1, \dots, n\}$, let $|\pi|$ denote the number of blocks of $\pi$.  For $1 \leq k \leq n$, let $T_k = \inf\{t: |\Theta_n(t)| = k\}$.

\begin{Lemma}\label{Kingtube}
For all $\epsilon > 0$, there exists a positive constant $C$ such that with probability at least $1 - \epsilon$, we have $$\bigg| T_k - \bigg( \frac{2}{k} - \frac{2}{n} \bigg) \bigg| \leq \frac{C}{n^{9/8}}$$ for all integers $k$ such that $n^{3/4} \leq k \leq n$.
\end{Lemma}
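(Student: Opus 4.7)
The plan is to exploit the explicit representation of Kingman's coalescent as a pure jump process with independent exponential holding times. Since when there are $j$ lineages the next merger occurs at rate $\binom{j}{2} = j(j-1)/2$, we can write $T_k = \sum_{j=k+1}^n \tau_j$, where the $\tau_j$ are independent and $\tau_j$ is exponential with rate $j(j-1)/2$. Thus $E[\tau_j] = 2/[j(j-1)]$ and telescoping gives the mean exactly:
\begin{equation*}
E[T_k] = \sum_{j=k+1}^n \frac{2}{j(j-1)} = 2 \sum_{j=k+1}^n \bigg( \frac{1}{j-1} - \frac{1}{j} \bigg) = \frac{2}{k} - \frac{2}{n}.
\end{equation*}
So the task reduces to controlling the centered quantity $M_k := T_k - E[T_k]$ uniformly for $k \geq n^{3/4}$.

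Next I would bound the variance. Since $\mathrm{Var}(\tau_j) = 4/[j^2(j-1)^2]$ and the $\tau_j$ are independent,
\begin{equation*}
\mathrm{Var}(T_k) = \sum_{j=k+1}^n \frac{4}{j^2 (j-1)^2} \leq \sum_{j=k+1}^{\infty} \frac{4}{(j-1)^4} \leq \int_{k-1}^{\infty} \frac{4}{x^4} \: dx = \frac{4}{3 (k-1)^3}.
\end{equation*}
For $k \geq n^{3/4}$ and $n$ large, this gives $\mathrm{Var}(T_k) \leq C_0 n^{-9/4}$ for an absolute constant $C_0$. In particular, the worst case occurs at $k_0 = \lceil n^{3/4} \rceil$, and the standard deviation there is of order $n^{-9/8}$, matching precisely the target scale.

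The main technical step is to upgrade this pointwise variance bound to a uniform bound over $k \in [n^{3/4}, n]$; the naive route of Chebyshev plus a union bound would cost an extra factor of $n$, which is fatal. To avoid this, I would use the martingale structure: setting $\mathcal{F}_k = \sigma(\tau_n, \tau_{n-1}, \dots, \tau_{k+1})$ (reverse filtration indexed by decreasing $k$), the process $(M_k)$ is a martingale because $M_k - M_{k+1} = \tau_{k+1} - E[\tau_{k+1}]$ is mean zero and independent of $\mathcal{F}_{k+1}$. Reindexing by $i = n-k$ to get a standard forward martingale and applying Doob's $L^2$ maximal inequality yields
\begin{equation*}
E\bigg[ \max_{k_0 \leq k \leq n} M_k^2 \bigg] \leq 4 \, \mathrm{Var}(T_{k_0}) \leq 4 C_0 n^{-9/4}.
\end{equation*}
Markov's inequality then gives $P(\max_k |M_k| > C n^{-9/8}) \leq 4 C_0 / C^2$, and choosing $C = \sqrt{4 C_0 / \epsilon}$ concludes the proof. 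The only real obstacle is recognizing that one should exploit the martingale structure rather than control $T_k$ separately for each $k$; once Doob's inequality is in play, the bound is essentially free.
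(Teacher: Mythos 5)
Your proof is correct and follows essentially the same route as the paper: both represent $T_k$ as a sum of independent exponential holding times, compute $E[T_k]=2/k-2/n$ exactly, bound the variance by $O(k^{-3})$, and then apply a maximal inequality for the centered partial sums (the paper invokes Kolmogorov's Maximal Inequality where you invoke Doob's $L^2$ maximal inequality after casting $M_k$ as a martingale, but these are the same tool up to a harmless constant). The variance estimate, the choice $k_0=\lceil n^{3/4}\rceil$, and the final Chebyshev/Markov step all match the paper's argument.
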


\begin{proof}
If $2 \leq k \leq n$, then $T_{k-1} - T_k$ has an exponential distribution with rate $\binom{k}{2}$.  Since $T_n = 0$, it follows that $$E[T_k] = \sum_{j = k+1}^n E[T_{j-1} - T_j] = \sum_{j=k+1}^n \frac{2}{j(j-1)} = \sum_{j=k+1}^n \bigg( \frac{2}{j-1} - \frac{2}{j} \bigg) = \frac{2}{k} - \frac{2}{n}.$$  For $1 \leq k \leq n$, let $Y_k = T_k - E[T_k]$.  Note that $Y_{k-1} - Y_k = T_{k-1} - T_k - 2/[k(k-1)]$, and these increments are independent.  Therefore, $$\mbox{Var}(Y_k) = \sum_{j = k+1}^n \mbox{Var}(Y_{j-1} - Y_j) = \sum_{j=k+1}^n \mbox{Var}(T_{j-1} - T_j) = \sum_{j = k+1}^n \frac{4}{j^2(j-1)^2} \leq \frac{C_1}{k^3}$$ for some positive constant $C_1$.  By Kolmogorov's Maximal Inequality,
$$P \bigg( \max_{k \leq j \leq n} |Y_j| > \frac{C}{k^{3/2}} \bigg) \leq \frac{k^3}{C^2} \cdot \frac{C_1}{k^3} = \frac{C_1}{C^2},$$ which is less than $\epsilon$ if we take $C$ sufficiently large.  The result follows by taking $k = \lceil n^{3/4} \rceil$, in which case $C/k^{3/2} \leq C/n^{9/8}$.
\end{proof}

For $1 \leq k \leq n$, let $U_k = \inf\{t: |\Psi_n(t)| = k\}$.  Define the function $g: [0, \infty) \rightarrow [0, \infty)$ by $g(t) = (1 - \alpha)^{-(1-\alpha)} t^{1-\alpha}$, where $\alpha = \gamma/(1 + \gamma)$.  It follows from (\ref{timechange}) that for all $t \geq 0$,
$$\Psi_n(g(t)) = \Theta_n \bigg( \frac{g(t)^{\gamma + 1}}{\gamma + 1} \bigg) = \Theta_n \bigg( \frac{t^{(1-\alpha)(\gamma + 1)}}{(1-\alpha)^{(1-\alpha)(\gamma + 1)} (\gamma + 1)} \bigg) = \Theta_n(t).$$  Therefore, $U_k = g(T_k)$ for all $k$.

Let $$L_n = \sum_{k=2}^n k (U_{k-1} - U_k).$$  Note that $L_n$ is the sum of the lengths of all branches in the coalescent tree because $U_{k-1} - U_k$ is the amount of time for which there are exactly $k$ lineages.  Let $m = \lceil n^{3/4} \rceil + 1$, and let $$L'_n = \sum_{k = m}^n k (U_{k-1} - U_k),$$ which is the total length of all branches in the coalescent tree when the tree is truncated at the point where the number of lineages reaches $\lceil n^{3/4} \rceil$.

\begin{Lemma}\label{Lprimelem}
We have $$\lim_{n \rightarrow \infty} \frac{L_n'}{n^{\alpha}} = \frac{2^{1-\alpha} (1 - \alpha)^{\alpha} \pi}{\sin(\pi \alpha)} \hspace{.1in}\textup{in probability}.$$
\end{Lemma}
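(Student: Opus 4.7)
The plan is to apply summation by parts to $L_n' = \sum_{k=m}^n k(U_{k-1} - U_k)$ to reduce it to a sum of $g(T_k)$ values, then replace $T_k$ by the deterministic value $\tau_k := 2/k - 2/n$ provided by Lemma \ref{Kingtube}, and finally identify the result as a Riemann sum for a Beta integral. Since $U_k = g(T_k)$ and $U_n = g(0) = 0$, collecting the coefficient of each $U_k$ collapses the telescoping sum to
$$L_n' = m \, g(T_{m-1}) + \sum_{k=m}^{n-1} g(T_k).$$
On the event of Lemma \ref{Kingtube}, $T_{m-1} = \tau_{m-1} + O(n^{-9/8}) = O(n^{-3/4})$, so $g(T_{m-1}) = O(n^{-3(1-\alpha)/4})$ and the boundary term has order $n^{3/4} \cdot n^{-3(1-\alpha)/4} = n^{3\alpha/4} = o(n^\alpha)$.

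For the main sum $\sum_{k=m}^{n-1} g(T_k)$, fix a small $\epsilon > 0$ and split the range into $m \le k \le (1-\epsilon) n$ and $(1-\epsilon) n < k \le n-1$. On the first sub-range, $\tau_k \ge 2\epsilon/((1-\epsilon) n)$ dominates the Lemma \ref{Kingtube} error $C n^{-9/8}$, so applying the Mean Value Theorem to $g$ (noting $g'(t) = (1-\alpha)^{\alpha} t^{-\alpha}$) yields
$$\Bigl| \sum_{m \le k \le (1-\epsilon) n} \bigl( g(T_k) - g(\tau_k) \bigr) \Bigr| \leq C' n^{-9/8} \sum_{m \le k \le (1-\epsilon) n} \tau_k^{-\alpha} = O_\epsilon(n^{\alpha - 1/8}) = o(n^\alpha)$$
with high probability. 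On the tail sub-range, monotonicity of $g$ together with Lemma \ref{Kingtube} gives $g(T_k) \le g(\tau_{\lceil (1-\epsilon) n \rceil} + C n^{-9/8}) = O((\epsilon/n)^{1-\alpha})$ uniformly in $k$, so the tail contributions of both $\sum g(T_k)$ and $\sum g(\tau_k)$ are $O(\epsilon^{2-\alpha} n^\alpha)$ and can be made arbitrarily small compared to $n^\alpha$ by choosing $\epsilon$ small.

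The remaining sum $\sum_{k=m}^{n-1} g(\tau_k) = (1-\alpha)^{-(1-\alpha)} 2^{1-\alpha} \sum_{k=m}^{n-1} (1/k - 1/n)^{1-\alpha}$ is, after the change of variable $x_k = k/n$ so that $n(1/k - 1/n) = (1-x_k)/x_k$, equal to $(1-\alpha)^{-(1-\alpha)} 2^{1-\alpha} n^\alpha$ times a Riemann sum of mesh $1/n$ for $f(x) = x^{\alpha-1}(1-x)^{1-\alpha}$ on $[m/n, 1)$. Since $f$ is bounded on $[n^{-1/4}, 1]$ and the piece omitted near $0$ is dominated by $\int_0^{n^{-1/4}} x^{\alpha-1}\, dx = O(n^{-\alpha/4})$, the Riemann sum converges to $\int_0^1 f(x) \, dx = B(\alpha, 2 - \alpha) = (1-\alpha) \pi/\sin(\pi \alpha)$ by Euler's reflection formula. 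Combining prefactors gives $(1-\alpha)^{-(1-\alpha)} \cdot 2^{1-\alpha} \cdot (1-\alpha) \cdot \pi/\sin(\pi\alpha) = 2^{1-\alpha} (1-\alpha)^{\alpha} \pi/\sin(\pi \alpha)$, as required. The principal technical obstacle is the degradation of Lemma \ref{Kingtube} near $k = n$, where the true order $\tau_k \sim 2/n^2$ is dwarfed by the additive error $C n^{-9/8}$; this is exactly why the truncation $m = \lceil n^{3/4} \rceil + 1$ appears in the definition of $L_n'$ and why the tail must be controlled by a uniform $g$-bound rather than by linearization.
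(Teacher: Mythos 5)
Your proposal is correct, and it takes a genuinely (though not dramatically) different route than the paper. Both rely on Lemma \ref{Kingtube}, a mean-value estimate for $g$, and the Euler Reflection Formula, but they diverge on where to linearize. The paper keeps $L_n'$ in the telescoping form $\sum_{k=m}^n k\bigl(g(T_{k-1}) - g(T_k)\bigr)$, applies $g(a) - g(b) \leq (a-b)g'(b)$ to each increment (using monotonicity of $g'$), and compares the resulting sum $\sum \tfrac{2}{k-1}\,g'(\cdot)$ to $\int_0^n x^{-1}(1/x - 1/n)^{-\alpha}\,dx = n^{\alpha}\,B(\alpha,1-\alpha)$, handling the endpoint near $k=n$ for the lower bound via the explicit cutoff $M = \max\{k : 2/k - 2/n - C n^{-9/8} > 0\}$. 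You instead Abel-sum first to collapse $L_n'$ to $m\,g(T_{m-1}) + \sum_{k=m}^{n-1} g(T_k)$, then replace $T_k$ by $\tau_k = 2/k - 2/n$ via the mean value theorem (splitting off the tail $k > (1-\epsilon)n$, where the $n^{-9/8}$ error swamps $\tau_k$, and controlling it by monotonicity of $g \circ T$), and recognize $\sum g(\tau_k)$ as a Riemann sum for $n^{\alpha} B(\alpha, 2-\alpha)$. The extra factor $(1-\alpha)$ between the two Beta functions is exactly compensated by the discrepancy between the $(1-\alpha)^{-(1-\alpha)}$ constant coming from $g$ and the $(1-\alpha)^{\alpha}$ constant coming from $g'$, so both land on the same limit. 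Your route front-loads the combinatorics through Abel summation and yields a cleaner Riemann-sum interpretation; the paper's route handles the upper and lower bounds symmetrically through the integral comparison and does not require the auxiliary $\epsilon$-split followed by a separate $\epsilon \to 0$ limit.
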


\begin{proof}
Let $\epsilon > 0$.  By Lemma \ref{Kingtube}, there is a constant $C$ such that with probability $1 - \epsilon$, we have $$\frac{2}{k} - \frac{2}{n} - \frac{C}{n^{9/8}} \leq T_k \leq \frac{2}{k} - \frac{2}{n} + \frac{C}{n^{9/8}}$$
whenever $n^{3/4} \leq k \leq n$.  Note that $U_n = 0$ and $L_n'$ is an increasing function of $U_k$ for $m-1 \leq k \leq n-1$.  Therefore, with probability at least $1 - \epsilon$,
\begin{align}\label{Ln1}
L_n' &= \sum_{k=m}^n k(g(T_{k-1}) - g(T_k)) \nonumber \\
&\leq \sum_{k=m}^n k \bigg( g \bigg( \frac{2}{k-1} - \frac{2}{n} + \frac{C}{n^{9/8}} \bigg) - g \bigg( \frac{2}{k} - \frac{2}{n} + \frac{C}{n^{9/8}} \bigg) \bigg) \nonumber \\
&\leq n g \bigg( \frac{2}{n-1} - \frac{2}{n} + \frac{C}{n^{9/8}} \bigg) + \sum_{k=m}^{n-1} k \bigg( \frac{2}{k-1} - \frac{2}{k} \bigg) g' \bigg( \frac{2}{k} - \frac{2}{n} + \frac{C}{n^{9/8}} \bigg),
\end{align}
where the last equality uses that $g'(t)$ is a decreasing function of $t$ because $0 < \alpha < 1$.  The first term on the right-hand side of (\ref{Ln1}) is $O(n^{1 - 9(1-\alpha)/8})$ and therefore is $o(n^{\alpha})$.  Since $g'(t) = (1 - \alpha)^{\alpha} t^{-\alpha}$, the second term on the right-hand side of (\ref{Ln1}) is equal to
\begin{equation}\label{Ln2}
\sum_{k=m}^{n-1} \frac{2(1 - \alpha)^{\alpha}}{k-1} \bigg( \frac{2}{k} - \frac{2}{n} + \frac{C}{n^{9/8}} \bigg)^{-\alpha} \leq 2^{1-\alpha} (1 - \alpha)^{\alpha} \sum_{k=m}^{n-1} \frac{1}{k-1} \bigg( \frac{1}{k} - \frac{1}{n} \bigg)^{-\alpha}.
\end{equation}
For all $k$ such that $m \leq k \leq n-1$,
$$\frac{1}{k-1} \bigg( \frac{1}{k} - \frac{1}{n} \bigg)^{-\alpha} = \bigg( \frac{k+1}{k-1} \bigg) \frac{1}{k+1} \bigg( \frac{1}{k} - \frac{1}{n} \bigg)^{-\alpha} \leq \frac{m+1}{m-1} \int_k^{k+1} \frac{1}{x} \bigg( \frac{1}{x} - \frac{1}{n} \bigg)^{-\alpha} \: dx.$$  Therefore, the second term on the right-hand side of (\ref{Ln1}) is at most $$2^{1-\alpha} (1-\alpha)^{\alpha} \bigg( \frac{m-1}{m+1} \bigg) \int_0^n \frac{1}{x} \bigg( \frac{1}{x} - \frac{1}{n} \bigg)^{-\alpha} \: dx.$$
By making the substitution $y = x/n$, we get
\begin{align}\label{integral}
\int_0^n \frac{1}{x} \bigg( \frac{1}{x} - \frac{1}{n} \bigg)^{-\alpha} \: dx &= n^{\alpha} \int_0^1 \frac{1}{y} \bigg( \frac{1}{y} - 1 \bigg)^{-\alpha} \: dy = n^{\alpha} \int_0^1 y^{\alpha - 1} (1-y)^{-\alpha} \: dy \nonumber \\
&= n^{\alpha} \Gamma(\alpha) \Gamma(1 - \alpha) = \frac{\pi n^{\alpha}}{\sin(\pi \alpha)},
\end{align}
where the last step uses Euler's Reflection Formula (see, for example, p. 9 of \cite{aar}).  Therefore, there exists a sequence $(a_n)_{n=1}^{\infty}$ tending to zero such that with probability at least $1 - \epsilon$, 
\begin{equation}\label{Lnupper}
\frac{L_n'}{n^{\alpha}} \leq \frac{2^{1-\alpha} (1 - \alpha)^{\alpha} \pi}{\sin(\pi \alpha)} + a_n.
\end{equation}

Likewise, for the lower bound, let $M = \max\{k: 2/k - 2/n - C/n^{9/8} > 0\}$.  Then with probability at least $1 - \epsilon$, we have
\begin{align}
L_n' &= \sum_{k=m}^n k(g(T_{k-1}) - g(T_k)) \nonumber \\
&\geq \sum_{k=m}^M k \bigg( g \bigg( \frac{2}{k-1} - \frac{2}{n} - \frac{C}{n^{9/8}} \bigg) - g \bigg( \frac{2}{k} - \frac{2}{n} - \frac{C}{n^{9/8}} \bigg) \bigg) \nonumber \\
&\geq \sum_{k=m}^M k \bigg( \frac{2}{k-1} - \frac{2}{k} \bigg) g' \bigg( \frac{2}{k-1} - \frac{2}{n} - \frac{C}{n^{9/8}} \bigg) \nonumber \\
&\geq \sum_{k=m}^M \frac{2}{k-1} (1 - \alpha)^{\alpha} \bigg( \frac{2}{k-1} - \frac{2}{n} \bigg)^{-\alpha} \nonumber \\
&= 2^{1-\alpha} (1 - \alpha)^{\alpha} \sum_{m-1}^{M-1} \frac{1}{k} \bigg( \frac{1}{k} - \frac{1}{n} \bigg)^{-\alpha}. \nonumber
\end{align}
For all $k$ such that $m-1 \leq k \leq M-1$,
$$\frac{1}{k} \bigg( \frac{1}{k} - \frac{1}{n} \bigg)^{-\alpha} = \bigg( \frac{k-1}{k} \bigg) \frac{1}{k-1} \bigg( \frac{1}{k} - \frac{1}{n} \bigg)^{-\alpha} \geq \frac{m-2}{m-1} \int_{k-1}^k \frac{1}{x} \bigg( \frac{1}{x} - \frac{1}{n} \bigg)^{-\alpha} \: dx.$$
Since $m/n \rightarrow 0$ and $M/n \rightarrow 1$ as $n \rightarrow \infty$, it now follows from (\ref{integral}) that there is a sequence $(b_n)_{n=1}^{\infty}$ tending to zero such that with probability at least $1 - \epsilon$,
\begin{equation}\label{Lnlower}
\frac{L_n'}{n^{\alpha}} \geq \frac{2^{1-\alpha} (1 - \alpha)^{\alpha} \pi}{\sin(\pi \alpha)} - b_n.
\end{equation}
The result now follows from (\ref{Lnupper}) and (\ref{Lnlower}).
\end{proof}

\begin{Lemma}\label{LnLem}
We have $$\lim_{n \rightarrow \infty} \frac{L_n}{n^{\alpha}} = \frac{2^{1-\alpha} (1 - \alpha)^{\alpha} \pi}{\sin(\pi \alpha)} \hspace{.1in}\textup{in probability}.$$
\end{Lemma}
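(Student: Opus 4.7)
The plan is to deduce Lemma \ref{LnLem} from Lemma \ref{Lprimelem} by showing that the omitted piece
\begin{equation*}
L_n - L_n' = \sum_{k=2}^{m-1} k(U_{k-1} - U_k),
\end{equation*}
where $m = \lceil n^{3/4} \rceil + 1$, is of order $o(n^{\alpha})$ in probability. Once this is established, the lemma follows by combining with Lemma \ref{Lprimelem} and the triangle inequality.

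First I would apply summation by parts to obtain
\begin{equation*}
L_n - L_n' = 2 U_1 + \sum_{j=2}^{m-2} U_j - (m-1) U_{m-1} \leq 2 U_1 + \sum_{j=2}^{m-2} U_j,
\end{equation*}
using that $U_{m-1} \geq 0$. Recall that $U_k = g(T_k)$ with $g(t) = (1-\alpha)^{-(1-\alpha)} t^{1-\alpha}$, which is concave and increasing on $[0, \infty)$ because $0 < 1-\alpha < 1$. The computation in the proof of Lemma \ref{Kingtube} gives $E[T_k] = 2/k - 2/n \leq 2/k$, so by Jensen's inequality
\begin{equation*}
E[U_k] = E[g(T_k)] \leq g(E[T_k]) \leq g(2/k) = 2^{1-\alpha}(1-\alpha)^{-(1-\alpha)} k^{-(1-\alpha)},
\end{equation*}
and the same reasoning gives $E[U_1] \leq g(2) = 2^{1-\alpha}(1-\alpha)^{-(1-\alpha)}$.

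Summing these estimates and comparing to an integral,
\begin{equation*}
E[L_n - L_n'] \leq 2 g(2) + 2^{1-\alpha}(1-\alpha)^{-(1-\alpha)} \sum_{k=2}^{m-2} k^{-(1-\alpha)} = O(m^{\alpha}) = O(n^{3\alpha/4}).
\end{equation*}
Since $3\alpha/4 < \alpha$, Markov's inequality gives $P(L_n - L_n' > \epsilon n^{\alpha}) = O(n^{-\alpha/4}) \to 0$ for every $\epsilon > 0$, so $(L_n - L_n')/n^{\alpha} \to 0$ in probability. Combined with Lemma \ref{Lprimelem}, this yields the claim.

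There is no serious obstacle in this argument; it is a routine combination of Abel summation, Jensen's inequality applied to the concave function $g$, and Markov's inequality. The structural point is that the truncation level $m \sim n^{3/4}$ was chosen in Lemma \ref{Lprimelem} precisely so that both the deterministic tube estimate there and the summability $\sum_{k \leq m} k^{-(1-\alpha)} = O(m^{\alpha})$ here leave enough slack to absorb the contribution of the top of the coalescent tree, where Lemma \ref{Kingtube} no longer provides useful control on the $T_k$.
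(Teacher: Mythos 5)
Your proof is correct, but you take a genuinely different route from the paper for the key estimate $E[L_n - L_n'] = o(n^\alpha)$. The paper uses the mean-value bound $g(T_{k-1}) - g(T_k) \leq g'(T_{m-1})(T_{k-1} - T_k)$ (valid since $g'$ is decreasing), conditions on the high-probability event $A = \{T_{m-1} \geq 2/(m-1) - 2/n - C/n^{9/8}\}$ from Lemma \ref{Kingtube} so that $g'(T_{m-1}) = O(n^{3\alpha/4})$ there, and then combines $E[\sum_{k < m} k(T_{k-1} - T_k)] = O(\log n)$ with Markov's inequality and $P(A^c) \leq \epsilon$. You instead apply Abel summation to reduce to $E[L_n - L_n'] \leq 2E[U_1] + \sum_{j=2}^{m-2} E[U_j]$, and then use concavity of $g$ via Jensen to get $E[U_j] \leq g(E[T_j]) \leq g(2/j) = O(j^{-(1-\alpha)})$, summing to $O(m^\alpha) = O(n^{3\alpha/4})$. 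This is unconditional --- it needs only $E[T_k] \leq 2/k$ and not the tube estimate of Lemma \ref{Kingtube} --- and it avoids the extra $\log n$ factor that the paper's conditional bound incurs (though neither affects the final conclusion). Both arguments exploit the same structural slack: the truncation at $m \sim n^{3/4}$ leaves a gap of $n^{\alpha/4}$ between the size of the discarded top of the tree and the target normalization $n^\alpha$.
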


\begin{proof}
By Lemma \ref{Lprimelem}, it suffices to show that
\begin{equation}\label{Ldiff}
\lim_{n \rightarrow \infty} \frac{L_n - L_n'}{n^{\alpha}} = 0 \hspace{.1in}\textup{in probability}.
\end{equation}
We have $$L_n - L_n' = \sum_{k=2}^{m-1} k (g(T_{k-1}) - g(T_k)) \leq \sum_{k=2}^{m-1} k g'(T_{m-1})(T_{k-1} - T_k).$$  Let $A$ be the event that $T_{m-1} \geq 2/(m-1) - 2/n - C/n^{9/8}$, which has probability at least $1 - \epsilon$ by Lemma \ref{Kingtube}.  There is a positive constant $C_2$ such that $g'(T_{m-1}) \leq C_2 n^{3 \alpha/4}$ on $A$ for all $n$.  Therefore,
$$E[L_n - L_n'|A] \leq C_2 n^{3 \alpha/4} \sum_{k=2}^{m-1} k E[T_{k-1} - T_k] \leq C_2 n^{3 \alpha/4} \sum_{k=2}^{m-1} \frac{2}{k-1} \leq C_2 n^{3 \alpha/4} (1 + \log n).$$  Thus, by Markov's Inequality,
$$P(L_n - L_n' > \epsilon n^{\alpha}) \leq P(A^c) + \frac{E[L_n - L_n'|A]}{\epsilon n^{\alpha}} \leq \epsilon + \frac{C_2}{\epsilon} n^{-\alpha/4} (1 + \log n),$$ which is less than $2 \epsilon$ for sufficiently large $n$.  The result follows. 
\end{proof}

Recall that $L_n$ is the sum of the lengths of all branches in the coalescent tree.  Also, recall that mutations occur along each branch of the coalescent tree at times of a Poisson process of rate $\theta$.  Therefore, if we denote by $S_n$ the number of mutations in the tree, then conditional on $L_n$, the distribution of $S_n$ is Poisson with mean $\theta L_n$.  Thus, Lemma \ref{LnLem} and Chebyshev's Inequality immediately yield the following result.

\begin{Cor}\label{SCor}
We have $$\lim_{n \rightarrow \infty} \frac{S_n}{n^{\alpha}} = \frac{\theta 2^{1-\alpha} (1 - \alpha)^{\alpha} \pi}{\sin(\pi \alpha)} \hspace{.1in}\textup{in probability}.$$
\end{Cor}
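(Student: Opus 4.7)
The plan is to combine Lemma \ref{LnLem} with a straightforward conditional second-moment estimate on $S_n$ given $L_n$. Let $c = 2^{1-\alpha}(1-\alpha)^{\alpha} \pi / \sin(\pi \alpha)$, so Lemma \ref{LnLem} says that $L_n/n^{\alpha} \to c$ in probability. Since conditional on $L_n$ the variable $S_n$ is Poisson with mean $\theta L_n$, both its conditional mean and its conditional variance equal $\theta L_n$, which is of order $n^{\alpha}$. Because the conditional mean is of the same order $n^\alpha$, the conditional coefficient of variation is of order $n^{-\alpha/2} \to 0$, so Chebyshev alone suffices to bring $S_n$ close to $\theta L_n$ relative to the scale $n^\alpha$.

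Fix $\epsilon > 0$. I would use the triangle inequality decomposition
$$P\bigg( \bigg| \frac{S_n}{n^{\alpha}} - \theta c \bigg| > \epsilon \bigg) \leq P\bigg( \bigg| \frac{S_n - \theta L_n}{n^{\alpha}} \bigg| > \frac{\epsilon}{2} \bigg) + P\bigg( \bigg| \frac{\theta L_n}{n^{\alpha}} - \theta c \bigg| > \frac{\epsilon}{2} \bigg).$$
The second term tends to zero as $n \to \infty$ directly from Lemma \ref{LnLem}. For the first term, I would introduce the truncation event $B_n = \{L_n \leq 2 c n^{\alpha}\}$, whose complement has probability tending to zero by Lemma \ref{LnLem}. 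On $B_n$, applying Chebyshev's inequality to the conditional Poisson distribution of $S_n$ given $L_n$ yields
$$P\bigg( |S_n - \theta L_n| > \frac{\epsilon n^{\alpha}}{2} \bigg| L_n \bigg) \leq \frac{\theta L_n}{(\epsilon n^{\alpha}/2)^2} \leq \frac{8 \theta c}{\epsilon^2 n^{\alpha}},$$
which goes to zero. Taking the total-expectation bound $P(A) \leq P(A \cap B_n) + P(B_n^c)$ and passing to the limit gives the claim.

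There is no real obstacle here; the only mild subtlety is being careful to bound the deviation of $S_n$ from its conditional mean on an event where $L_n$ is controlled, rather than in expectation over all of $L_n$'s distribution (which would also work but would require a separate justification that $E[L_n]/n^{\alpha}$ is bounded). Using the truncation $B_n$ sidesteps this entirely and keeps the argument within two lines of Chebyshev.
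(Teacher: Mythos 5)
Your proposal is correct and is exactly the argument the paper leaves implicit when it says that Lemma \ref{LnLem} and Chebyshev's inequality "immediately yield" the corollary: condition on $L_n$, use the Poisson conditional mean and variance $\theta L_n$, truncate to where $L_n = O(n^{\alpha})$, and apply Chebyshev. You have simply filled in the routine details the paper omits.
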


Theorem \ref{coalth} now follows from Corollary \ref{SCor} and the next lemma.

\begin{Lemma}
$$\lim_{n \rightarrow \infty} \frac{S_n - K_n}{n^{\alpha}} = 0 \hspace{.1in}\textup{in probability}.$$
\end{Lemma}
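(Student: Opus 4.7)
The plan is to write $S_n - K_n = E - 1$, where $E := S_n + 1 - K_n$ counts the ``empty'' connected components of the coalescent tree $\mathcal{T}$ after its $S_n$ mutation points are removed (those components containing no leaf); note $K_n \leq S_n + 1$ holds because the $K_n$ allelic blocks are in bijection with the nonempty components, so $E \geq 0$. I then aim to show $E = o_P(n^\alpha)$. Classify each empty component as \emph{Type I} (lying entirely in the interior of a single edge, necessarily between two consecutive mutations on that edge) or \emph{Type II} (containing at least one internal node of $\mathcal{T}$). The Type I count equals $\sum_e \max(0, m(e) - 1)$, where $m(e)$ counts the mutations on edge $e$. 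For Type II, any such component $s$ has an internal node $v \in s$ that is minimal within $s$ under the tree order; both child edges of $v$ must carry at least one mutation, for otherwise $s$ would extend past the mutation-free child edge either to a leaf (contradicting emptiness) or to a strictly smaller internal node of $s$ (contradicting minimality of $v$). Distinct Type II components yield distinct $v$, so the Type II count is at most $W :=$ the number of internal nodes both of whose child edges carry at least one mutation.

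Conditioning on $\mathcal{T}$, the variables $m(e)$ are independent Poisson$(\theta \ell_e)$. The elementary inequality $\lambda - 1 + e^{-\lambda} \leq \lambda^2/2$ for $\lambda \geq 0$ gives $E[\sum_e \max(0, m(e) - 1) \mid \mathcal{T}] \leq (\theta^2/2) \sum_e \ell_e^2$, and $1 - e^{-x} \leq x$ together with $ab \leq (a^2+b^2)/2$ gives $E[W \mid \mathcal{T}] \leq \theta^2 \sum_v \ell_{e_1(v)} \ell_{e_2(v)} \leq (\theta^2/2) \sum_e \ell_e^2$, where the last step uses that each edge of $\mathcal{T}$ is a child edge of exactly one internal node. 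Hence $E[E \mid \mathcal{T}] \leq \theta^2 \sum_e \ell_e^2$, and Markov's inequality conditional on $\mathcal{T}$ yields $P(E > \eta n^\alpha \mid \mathcal{T}) \leq \theta^2 \sum_e \ell_e^2 / (\eta n^\alpha)$ for each $\eta > 0$. Since this upper bound is $\leq 1$, bounded convergence reduces the claim to showing $\sum_e \ell_e^2 = o_P(n^\alpha)$.

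The main obstacle is this remaining bound. Taking $m = \lceil n^{3/4}\rceil + 1$ as in Lemma \ref{Lprimelem}, I split the edges according to whether their entire lifespan lies in $[0, U_{m-1}]$. Edges of the first kind have length $\leq U_{m-1}$ and total length $\leq L_n'$, so they contribute at most $U_{m-1} L_n'$ to $\sum_e \ell_e^2$. Edges of the second kind have length $\leq U_1$ and total length at most $(L_n - L_n') + (m-1) U_{m-1}$, where the second summand accounts for the portions of the at most $m-1$ ``straddling'' edges lying below time $U_{m-1}$. Lemma \ref{Kingtube} gives $T_{m-1} = O_P(n^{-3/4})$, whence $U_{m-1} = g(T_{m-1}) = O_P(n^{-3(1-\alpha)/4})$; the proof of Lemma \ref{LnLem} yields $L_n - L_n' = o_P(n^\alpha)$ along with $L_n' = O_P(n^\alpha)$; and $U_1 = g(T_1) = O_P(1)$. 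Combining, the first-kind contribution is $O_P(n^{(7\alpha-3)/4})$ and the second-kind contribution is $O_P(1) \cdot (o_P(n^\alpha) + O_P(n^{3\alpha/4}))$, each of which is $o_P(n^\alpha)$ since both $(7\alpha-3)/4 < \alpha$ and $3\alpha/4 < \alpha$ when $\alpha < 1$.
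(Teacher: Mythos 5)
Your argument is correct, and it takes a genuinely different route from the paper's. The paper works directly with $B_n$, the number of mutations that are not the most recent mutation inherited by any sampled individual (which differs from $S_n - K_n$ by at most $1$), and bounds $B_n$ by a concrete enumeration: it separates out the mutations occurring after the lineage count drops to $\lceil n^{3/4}\rceil$ (controlled via $L_n - L_n' = o_P(n^\alpha)$), and then, for the remaining at most $O_P(n^\alpha)$ mutations, it bounds the number of further mutations lying within time $O(U_{m-1})$ of each via the Poisson rate on short branches. Your argument instead reduces the problem to the purely geometric functional $\sum_e \ell_e^2$ (sum of squared branch lengths): conditioning on the tree, your Type I/Type II decomposition of the empty components and a second-moment bound for Poisson counts give $E[S_n+1-K_n \mid \mathcal{T}] \leq \theta^2 \sum_e \ell_e^2$, after which the mutation randomness is disposed of once and for all by a conditional Markov bound, leaving only the deterministic estimate $\sum_e \ell_e^2 = o_P(n^\alpha)$, which you obtain from the same short/long edge split at time $U_{m-1}$. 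Both proofs draw on the same underlying inputs (Lemma \ref{Kingtube}, the tightness of $L_n'/n^\alpha$ from Lemma \ref{Lprimelem}, and $L_n - L_n' = o_P(n^\alpha)$ from the proof of Lemma \ref{LnLem}), but your route isolates a cleaner intermediate object and a general-purpose inequality, which would transfer readily to other coalescent trees; the paper's route is more hands-on and stays closer to the combinatorics of the allelic partition. One very small inaccuracy in your write-up: the Markov bound $\theta^2\sum_e\ell_e^2/(\eta n^\alpha)$ is not itself always $\leq 1$; you should take its minimum with $1$ before invoking bounded convergence. This does not affect the argument.
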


\begin{proof}
Note that if the most recent mutation inherited by two sampled individuals is the same, then all of the mutations inherited by these individuals must be the same.  This is because when we follow the two lineages backwards in time, they must coalesce before any mutations are observed.  Therefore, each block of the allelic partition $\Pi_n$ can be associated with a mutation that is the most recent mutation inherited by the individuals in that block, with the possible exception of one block corresponding to individuals with no mutations.  It follows that $K_n \leq S_n + 1$.

To get a bound in the other direction, note that the only mutations that are not associated with a block of the allelic partition as above are the mutations that are not the most recent mutation inherited by any individual.  We denote the number of such mutations by $B_n$.  Then $K_n \geq S_n - B_n$, so it suffices to show that $B_n/n^{\alpha}$ converges in probability to zero as $n \rightarrow \infty$.

Let $R_n$ denote the number of mutations that occur when the number of lineages is $\lceil n^{3/4} \rceil$ or fewer.  Enumerate the remaining mutations in decreasing order of time, so that the first mutation is the most recent one, the second mutation is the second most recent, and so on.  Let $R_{k,n}$ denote the number of mutations along the branch of the coalescent tree that we get by starting at the $k$th mutation and following this lineage back until time $$g \bigg( \frac{2}{m-1} - \frac{2}{n} + \frac{C}{n^{9/8}} \bigg),$$ where $C$ is the constant from Lemma \ref{Kingtube}.  Choose $C_3 > \theta 2^{1-\alpha} (1 - \alpha)^{\alpha} \pi /(\sin(\pi \alpha))$.  On the event that $T_{m-1} \leq 2/(m-1) - 2/n + C/n^{9/8}$, which has probability at least $1 - \epsilon$ by Lemma \ref{Kingtube}, and on the event that $S_n \leq C_3 n^{\alpha}$, which has probability tending to one as $n \rightarrow \infty$ by Corollary \ref{SCor}, we have
\begin{equation}\label{BnRn}
B_n \leq R_n + \sum_{k=1}^{\lfloor C_3 n^{\alpha} \rfloor} R_{k,n}.
\end{equation}

Conditional on $L_n$ and $L_n'$, the distribution of $R_n$ is Poisson with mean $\theta (L_n - L_n')$.  Therefore, by (\ref{Ldiff}), $R_n/n^{\alpha}$ converges to zero in probability as $n \rightarrow \infty$.  Because mutations occur along each lineage at rate $\theta$, we have for all $k \leq \lfloor C_3 n^{\alpha} \rfloor$, $$E[R_{k,n}] \leq \theta g \bigg( \frac{2}{m-1} - \frac{2}{n} + \frac{C}{n^{9/8}} \bigg) \leq C_4 n^{-3(1-\alpha)/4}$$ for some positive constant $C_4$.  By summing over $k$ and then applying Markov's Inequality, we get that $$\lim_{n \rightarrow \infty} \frac{1}{n^{\alpha}} \sum_{k=1}^{\lfloor C_3 n^{\alpha} \rfloor} R_{k,n} = 0 \hspace{.1in}\textup{in probability}.$$  Since (\ref{BnRn}) holds with probability at least $1 - 2 \epsilon$ for sufficiently large $n$, the result follows.
\end{proof}

\clearpage
\noindent {\bf \Large Acknowledgments}
\bigskip

\noindent The author thanks Alexander Gnedin and Christina Goldschmidt, who posed the problem of extending Propositions \ref{karlin1} and \ref{karlin2} at an Oberwolfach meeting in 2007.  He also thanks Vlada Limic and a referee for comments on a previous version.

\end{document}